\newcommand{\CM}{Cohen-Macaulay}
\newcommand{\wrt}{with respect to}
\newcommand{\n}{\mathfrak{n} }
\newcommand{\m}{\mathfrak{m} }
\newcommand{\bP}{\mathbb{\partial}}
\newcommand{\rt}{\rightarrow}
\newcommand{\ov}{\overline}
\newcommand{\K}{\mathbf{K}_{\bullet} }
\newcommand{\bx}{\mathbf{x}}
\newcommand{\ba}{\mathbf{a}}
\newcommand{\image}{\operatorname{image}}
\newcommand{\BigFig}[1]{\parbox{12pt}{\Huge #1}}
\newcommand{\BigZero}{\BigFig{0}}
\newcommand{\hh}{\operatorname{height}}
\newcommand{\ann}{\operatorname{ann}}
\newcommand{\Supp}{\operatorname{Supp}}
\newcommand{\Spec}{\operatorname{Spec}}
\newcommand{\soc}{\operatorname{soc}}
\newcommand{\Hom}{\operatorname{Hom}}
\newcommand{\Ext}{\operatorname{Ext}}
\newcommand{\Ass}{\operatorname{Ass}}
\newcommand{\Min}{\operatorname{Min}}
\theoremstyle{plain}
\newtheorem{thm}{Theorem}
\newtheorem{lem}[thm]{Lemma}
\newtheorem{theorem}{Theorem}[section]
\newtheorem{lemma}[theorem]{Lemma}
\newtheorem{proposition}[theorem]{Proposition}
\theoremstyle{definition}
\newtheorem{remark}[theorem]{Remark}
\theoremstyle{remark}
\numberwithin{equation}{theorem}
\begin{document}

\title{Injective modules over some rings of Differential operators}
 \author{Tony~J.~Puthenpurakal}
\date{\today}
\address{Department of Mathematics, Indian Institute of Technology Bombay, Powai, Mumbai 400 076}

\email{tputhen@math.iitb.ac.in}

\keywords{D-modules, local cohomology, associated primes, injective modules}
\subjclass{Primary 13N10; Secondary 13C11, 13D45}

\begin{abstract}
Let $R$ be a regular domain containing a field $K$ of characteristic zero and let $D$ be the ring of $K$-linear differential operators on $R$. Let $E$ be an injective left $D$-module. We ask the question, when is $E$ injective as a
$R$-module? We show that this is indeed the case when $R = K[X_1,\ldots,X_n]$ or $R = K[[X_1,\ldots,X_n]]$ or $R = \mathbb{C}\{z_1,\ldots,z_n\}$. We also give an application of our result to local cohomology.
\end{abstract}

\maketitle

\section*{Introduction}
The motivation for this paper comes from a problem in local cohomology which we now describe. Let $K$ be a field of characteristic zero, $R = K[X_1,\ldots,X_n]$ and let $I$ be an ideal in $R$. Let $A_n(K) = K<X_1,\ldots,X_n, \partial_1, \ldots, \partial_n>$  be the $n^{th}$ Weyl algebra over $K$. By a result due to Lyubeznik, see \cite{Ly}, the local cohomology modules $H^i_I(R)$ are finitely generated $A_n(K)$-modules for each $i \geq 0$. If $J$ is another
ideal in $R$ then we have a Mayer-Vietoris sequence
\[
\cdots \rt H^{i}_{I+J}(R) \xrightarrow{\rho^i} H^i_I(R) \oplus H^i_J(R) \xrightarrow{\pi^i} H^i_{I\cap J}(R) \xrightarrow{\delta^i} H^{i+1}_{I+J}(R) \rt \cdots
\]
It can be easily seen that for all $i \geq 0$; $\rho^i,\pi^i$ are in-fact $A_n(K)$-linear; for instance see
\cite[1.5]{P}.
 A natural question is whether
$\delta^i$ is $A_n(K)$-linear for all $i \geq 0$? In this paper we show this is so; see Proposition \ref{app}. The crucial ingredient is the following: Let $E$ be an injective left $A_n(K)$ module and since $R$ is a subring of $A_n(K)$ consider $E$
as a $R$-module. Then we prove that $E$ is an injective $R$-module. More generally we prove the following result:
\begin{thm}\label{main}
Let $S$ be a ring containing a regular commutative ring $R$ as a subring. Assume that $S$ considered as a right $R$-module is projective. Let $E$ be a left $S$-module which is injective as a $S$-module. Then $E$ is an injective $R$-module.
\end{thm}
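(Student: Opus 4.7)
The plan is to verify that $E$ is injective as a left $R$-module by showing that the contravariant functor $\Hom_R(-, E)$ on left $R$-modules is exact. The key tool is the standard restriction/extension-of-scalars adjunction associated to the ring inclusion $R \hookrightarrow S$. Since $R$ is commutative and sits as a subring of $S$, the ring $S$ carries a natural $(S,R)$-bimodule structure, so for any left $R$-module $N$ and any left $S$-module $M$ there is a natural isomorphism
\[
\Hom_S(S \otimes_R N, M) \cong \Hom_R(N, M),
\]
where on the right side $M$ is regarded as a left $R$-module by restriction of scalars. Specialising to $M = E$ yields a natural isomorphism of functors $\Hom_R(-, E) \cong \Hom_S(S \otimes_R -, E)$ on left $R$-modules.

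Next I would combine the exactness properties on each side. Since $S$ is projective, hence flat, as a right $R$-module, the functor $S \otimes_R -$ is exact on left $R$-modules. Since $E$ is injective as a left $S$-module, the functor $\Hom_S(-, E)$ is exact on left $S$-modules. Their composition is therefore exact, and by the natural isomorphism above $\Hom_R(-, E)$ is exact, which is precisely injectivity of $E$ as an $R$-module. Equivalently, one could verify Baer's criterion directly: apply $S \otimes_R -$ to $0 \rt I \rt R$ for an arbitrary ideal $I \sub R$, then apply $\Hom_S(-, E)$, and use the adjunction to obtain the desired surjection $\Hom_R(R, E) \rt \Hom_R(I, E)$.

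There is no serious obstacle in this argument; it is essentially a bookkeeping exercise in the extension-of-scalars adjunction. The only point requiring attention is keeping the left/right module conventions straight, so that $S \otimes_R N$ is genuinely a left $S$-module and the adjunction isomorphism is available in the stated form. It is worth noting that the regularity hypothesis on $R$ is not actually used in this lemma — mere flatness of $S$ as a right $R$-module combined with $S$-injectivity of $E$ suffices — and that projectivity, as opposed to flatness, is likewise not essential for this step; these stronger hypotheses are presumably adopted with the intended applications in mind, where $S = A_n(K)$ is free (hence projective) as a right $R = K[X_1,\ldots,X_n]$-module.
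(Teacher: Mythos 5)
Your argument is correct, and it takes a genuinely different route from the paper's. You use the change-of-rings adjunction $\Hom_S(S\otimes_R N,E)\cong\Hom_R(N,E)$: since $S\otimes_R -$ is exact ($S$ is flat as a right $R$-module) and $\Hom_S(-,E)$ is exact ($E$ is $S$-injective), the composite $\Hom_R(-,E)$ is exact. This is the standard fact that restriction of scalars along $R\to S$ preserves injectives whenever $S$ is flat as a right $R$-module, and, as you note, it uses neither the regularity of $R$ nor projectivity (only flatness) of $S$ over $R$. The paper argues quite differently: it first proves (its Lemma B) that over a regular ring $R$ a module $E$ is injective once $\Ext^1_R(R/J,E)=0$ for every ideal $J$ generated by a regular sequence --- a reduction carried out via Bass numbers of a minimal injective resolution together with a prime-avoidance construction of regular sequences forming part of a minimal generating set of $PR_P$ --- and then verifies this vanishing by hand: an $R$-linear map $\phi\colon J\to E$ with $J=(a_1,\ldots,a_g)$ is extended to an $S$-linear map $\psi\colon SJ\to E$ by $\psi(\sum s_ia_i)=\sum s_i\phi(a_i)$, whose well-definedness is checked using acyclicity of the Koszul complex of $a_1,\ldots,a_g$ on $S$ viewed as a right $R$-module (this is where projectivity of $S$ over $R$ enters), after which $\psi$ is extended to all of $S$ by $S$-injectivity of $E$ and restricted back to $R$. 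Your proof is shorter and strictly more general; the paper's is self-contained, produces the extension explicitly, and yields Lemma B as a by-product, which the author highlights as a result of independent interest.
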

Our motivation of $S$ in the theorem above is ring's of differential operators. Note that when $R = K[X_1,\ldots,X_n]$ or $R = K[[X_1,\ldots,X_n]]$ or $R = \mathbb{C}\{z_1,\ldots,z_n\}$ and $S$ is the ring of differential operators on $R$ then $S$ satisfies the hypotheses of Theorem \ref{main}, see section 1. After we proved the result, we observed that there are
many other examples of rings which satisfy the hypotheses of Theorem \ref{main}. In section 1 we have listed many examples  of $S$ which satisfy the hypotheses in Theorem 1.

The main technical tool used to prove Theorem \ref{main} is the following result:
\begin{lem}\label{chief}
Let $R$ be a regular commutative ring and let $E$ be a $R$-module. If $\Ext_{R}^{1}(R/J,E) = 0$ for every ideal $J$
 generated by a regular sequence, then $E$ is an injective $R$-module.
\end{lem}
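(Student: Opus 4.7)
The plan is to reduce the statement to Bass's characterization of injective modules: over a Noetherian ring, $E$ is injective if and only if $\Ext^i_{R_\mathfrak{p}}(k(\mathfrak{p}), E_\mathfrak{p}) = 0$ for every prime $\mathfrak{p}$ and every $i \geq 1$, where $k(\mathfrak{p}) = R_\mathfrak{p}/\mathfrak{p}R_\mathfrak{p}$. I will assume $R$ is a regular Noetherian domain; the general regular Noetherian case reduces to this since such a ring is a finite product of regular domains and both hypothesis and conclusion respect direct products.

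The first preparatory step is to boost the given $\Ext^1$-vanishing to vanishing of $\Ext^i_R(R/J, E)$ for all $i \geq 1$ and all regular-sequence ideals $J$. I would induct on the length of the regular sequence $x_1, \ldots, x_k$ generating $J$: letting $J' = (x_1, \ldots, x_{k-1})$, the short exact sequence $0 \to R/J' \xrightarrow{x_k} R/J' \to R/J \to 0$ produces a long exact sequence of $\Ext$ in which the hypothesis controls degree $1$ and the inductive vanishing for $J'$ controls degrees $\geq 2$.

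The heart of the argument is a prime-avoidance construction: for a prime $\mathfrak{p}$ of height $h$, I build a regular sequence $x_1, \ldots, x_h$ \emph{in $R$} whose images generate $\mathfrak{p}R_\mathfrak{p}$. Since $R_\mathfrak{p}$ is regular local, $\mathfrak{p}R_\mathfrak{p}$ is itself a regular-sequence ideal of length $h$, and I lift minimal generators one at a time. Given $x_1, \ldots, x_i$ with the desired properties, I choose $x_{i+1} \in \mathfrak{p}$ that simultaneously avoids (a) the finitely many minimal primes of $(x_1, \ldots, x_i)$, each of height $i < h$, and (b) the submodule $\mathfrak{p}^2 + (x_1, \ldots, x_i)$, whose image in the $h$-dimensional $k(\mathfrak{p})$-vector space $\mathfrak{p}R_\mathfrak{p}/\mathfrak{p}^2R_\mathfrak{p}$ is a proper subspace. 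Prime avoidance applies because $\mathfrak{p}$ is not contained in any height-$i$ prime, nor in a proper ideal whose localization at $\mathfrak{p}$ misses $\mathfrak{p}R_\mathfrak{p}$. This delivers a regular sequence in $R$ with $(x_1, \ldots, x_h)R_\mathfrak{p} = \mathfrak{p}R_\mathfrak{p}$.

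Combining the two preparations and using flatness of localization, the strengthened hypothesis gives
\[
\Ext^i_{R_\mathfrak{p}}\bigl(k(\mathfrak{p}),\, E_\mathfrak{p}\bigr) = \Ext^i_{R_\mathfrak{p}}\bigl(R_\mathfrak{p}/(x_1,\ldots,x_h)R_\mathfrak{p},\, E_\mathfrak{p}\bigr) = \Ext^i_R\bigl(R/(x_1,\ldots,x_h),\, E\bigr) \otimes_R R_\mathfrak{p} = 0
\]
for every $i \geq 1$ and every prime $\mathfrak{p}$, and Bass's theorem then delivers injectivity of $E$. The main obstacle is the prime-avoidance lifting in the middle step; the rest is formal manipulation of the long exact sequence and appeal to standard theorems of Bass and localization.
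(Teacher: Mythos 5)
Your overall strategy coincides with the paper's: for each prime $\mathfrak{p}$ of height $h$, construct by prime avoidance a regular sequence $x_1,\ldots,x_h$ in $R$ whose images generate $\mathfrak{p}R_\mathfrak{p}$, then localize the hypothesis $\Ext^1_R(R/(x_1,\ldots,x_h),E)=0$ to get $\Ext^1_{R_\mathfrak{p}}(\kappa(\mathfrak{p}),E_\mathfrak{p})=0$ and conclude that $E$ is injective from the vanishing of the first Bass numbers. Your preliminary step boosting the vanishing to all $\Ext^i$ with $i\ge 1$ is correct but unnecessary: the minimal injective resolution argument (equivalently, the version of Bass's criterion involving only $\mu_1$) needs only the case $i=1$. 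The reduction to the domain case is likewise harmless but not needed.

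There is, however, one genuine gap in the prime-avoidance construction. You choose $x_{i+1}\in\mathfrak{p}$ avoiding the ideal $\mathfrak{p}^2+(x_1,\ldots,x_i)$, but what you actually need is that the image of $x_{i+1}$ in $\mathfrak{p}R_\mathfrak{p}/\mathfrak{p}^2R_\mathfrak{p}$ lies outside the span of the images of $x_1,\ldots,x_i$; for that you must avoid the contracted ideal $L=\bigl(\mathfrak{p}^2R_\mathfrak{p}+(x_1,\ldots,x_i)R_\mathfrak{p}\bigr)\cap R$, which can be strictly larger than $\mathfrak{p}^2+(x_1,\ldots,x_i)$. Already in the base case, avoiding $\mathfrak{p}^2$ does not rule out landing in the symbolic power $\mathfrak{p}^{(2)}=\mathfrak{p}^2R_\mathfrak{p}\cap R$, and $\mathfrak{p}^{(2)}\ne\mathfrak{p}^2$ does occur in regular rings (for instance for the defining prime of a monomial space curve in $k[x,y,z]$); an element of $\mathfrak{p}^{(2)}\setminus\mathfrak{p}^2$ is useless for your purpose since it maps to zero in $\mathfrak{p}R_\mathfrak{p}/\mathfrak{p}^2R_\mathfrak{p}$, and then $(x_1,\ldots,x_h)R_\mathfrak{p}\ne\mathfrak{p}R_\mathfrak{p}$ by Nakayama. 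The fix is exactly what the paper's Proposition 2.1 does: apply prime avoidance to $L$ (respectively to $\mathfrak{p}^{(2)}$ in the base case) together with the minimal primes of $(x_1,\ldots,x_i)$, and verify $\mathfrak{p}\not\subseteq L$ by a Nakayama argument in $R_\mathfrak{p}$. With that correction your argument goes through and is essentially the paper's proof.
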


Matlis theory of injective modules over left Noetherian rings, in particular over commutative Noetherian rings is a
 very basic tool.
We analyze the structure of injective $S$-modules considered as a $R$-module. For this we first assume that $S$ is a left Noetherian ring. 
 We make a further assumption:

$(*)$  Given an ideal $I$ in $R$ and $s \in S$, there exists $r \geq 1$ ($r$ depending on $s$) such that $I^rs \subseteq SI$.

This hypothesis is satisfied when $S$ is the ring of differential operators. It is also trivially satisfied when $S$ is commutative. We show
\begin{thm}\label{ind-I}
(with assumptions as above). Let $M$ be an $S$-module with $E_S(M)$ an indecomposable injective $S$-module. Then
$\Ass_R E_S(M)$ has a unique maximal element $P$. Furthermore $P \in \Ass_R M$.
\end{thm}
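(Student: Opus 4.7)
The crux of the argument is hypothesis $(*)$, which I will exploit to show that for every ideal $J \subseteq R$ the $J$-torsion submodule
\[
\Gamma_J(E) \;=\; \{e \in E : J^n e = 0 \text{ for some } n \geq 1\}
\]
is actually an $S$-submodule of $E$, not merely an $R$-submodule. Given $e \in \Gamma_J(E)$ with $J^n e = 0$ and an arbitrary $s \in S$, applying $(*)$ to the ideal $J^n$ produces some $r \geq 1$ with $(J^n)^r s \subseteq SJ^n$; then $J^{nr}(se) \subseteq (SJ^n)\, e = 0$, so $se \in \Gamma_J(E)$.

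With this in hand, I would invoke that $E = E_S(M)$ is indecomposable injective over $S$: every nonzero $S$-submodule of $E$ is $S$-essential in $E$ (otherwise its $S$-injective envelope inside $E$ would split off as a proper summand). For each $P \in \Ass_R E$ the submodule $\Gamma_P(E)$ contains an element with $R$-annihilator $P$, so it is nonzero, hence $S$-essential. Given $P, Q \in \Ass_R E$, the intersection $\Gamma_P(E) \cap \Gamma_Q(E)$ is thus a nonzero $S$-submodule; a routine binomial check identifies this intersection with $\Gamma_{P+Q}(E)$, so there exists a nonzero $e \in E$ killed by some power of $P+Q$. Any $\mathfrak{q} \in \Ass_R(Re) \subseteq \Ass_R E$ then satisfies $\mathfrak{q} \supseteq \sqrt{\ann_R(e)} \supseteq P+Q$, so every pair in $\Ass_R E$ admits a common upper bound in $\Ass_R E$. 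Since $R$ is Noetherian, $\Ass_R E$ has maximal elements, and pairwise upper bounds force such a maximal element $P$ to be unique.

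For the remaining assertion, pick $e \in E$ with $\ann_R(e) = P$. Since $M$ is $S$-essential in $E = E_S(M)$, some $s \in S$ satisfies $se \in M \setminus \{0\}$. Applying $(*)$ to $P$ and $s$ produces $r$ with $P^r s \subseteq SP$, giving $P^r(se) = 0$ and thus $\sqrt{\ann_R(se)} \supseteq P$. The nonzero submodule $Rse \subseteq M$ admits an associated prime $\mathfrak{q}$; the containment $\mathfrak{q} \supseteq P$ together with $\mathfrak{q} \in \Ass_R M \subseteq \Ass_R E$ and the maximality of $P$ in $\Ass_R E$ forces $\mathfrak{q} = P$, so $P \in \Ass_R M$.

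The principal technical point is the first step: it is the only place where the non-commutativity of $S$ interacts non-trivially with the $R$-module structure on $E$, and every subsequent argument depends on having $\Gamma_J(E)$ stable under the full $S$-action. The remainder is standard manipulation of associated primes combined with the essentiality of $M$ inside its $S$-injective envelope.
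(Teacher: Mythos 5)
Your proof is correct, and while it rests on the same two pillars as the paper's argument --- hypothesis $(*)$ making $\Gamma_J$ of an $S$-module an $S$-submodule (the paper's Proposition 4.3), and indecomposability of $E=E_S(M)$ forcing every nonzero $S$-submodule to be essential (the paper cites Matlis for $E_S(M)=E_S(N)$ when $0\neq N\subseteq M$) --- the route to uniqueness is genuinely different. The paper first proves a standalone proposition: for any essential extension $M\subseteq N$ of $S$-modules, a maximal element of $\Ass_R N$ lies in $\Ass_R M$ (proved via $(*)$ plus a localization at $P$ to manufacture an element with annihilator exactly $PR_P$); it then descends to $\Gamma_P(M)\subseteq M$, notes $\Ass_R\Gamma_P(M)=\{P\}$ and $E_S(\Gamma_P(M))=E_S(M)$, and reapplies that proposition to trap the second maximal prime $Q$ inside $\{P\}$. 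You instead stay entirely inside $E$: since $\Gamma_P(E)$ and $\Gamma_Q(E)$ are nonzero $S$-submodules of an indecomposable injective, their intersection $\Gamma_{P+Q}(E)$ is nonzero, so any two members of $\Ass_R E$ have a common upper bound in $\Ass_R E$; upward directedness plus the Noetherian existence of maximal elements gives uniqueness at once. Your derivation of $P\in\Ass_R M$ is also a streamlined version of the paper's Proposition 4.4: having $P^r(se)=0$ with $0\neq se\in M$, you take $\mathfrak{q}\in\Ass_R(Rse)\subseteq\Ass_R M\subseteq\Ass_R E$ containing $P$ and invoke maximality of $P$ in $\Ass_R E$, which avoids the paper's localization step (though note the paper cannot do this in Proposition 4.4 itself, since there it has no ambient maximality to exploit beyond $\Ass_R N$, whereas you prove uniqueness first and then harvest it). Both arguments are sound; yours is arguably tighter for this particular theorem, while the paper's Proposition 4.4 is stated in greater generality (arbitrary essential extensions, no indecomposability) and is reused elsewhere.
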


A particulary interesting case is when $S$ is commutative and finitely generated as a $R$-module. Note that we are also assuming that $S$ is projective as a $R$-module. This case is equivalent to assuming $S$ is Cohen-Macaulay as a ring. 
We prove
\begin{thm}\label{CM-I}
(with assumptions as above). Let $\m$ be a maximal ideal of $S$. Set $\n = \m \cap R$.
Let $\n S = Q_1\cap Q_2\cap\cdots \cap Q_c$ be a minimal primary decomposition of $\n S$ where $Q_1$ is $\m$-primary.
Then
\[
E_S\left(\frac{S}{\m}\right) = E_R\left(\frac{R}{\n}\right)^{\ell_R(S/Q_1)}.
\]
\end{thm}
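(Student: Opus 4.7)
The plan is to combine Theorem~\ref{main} with Matlis's structure theorem, reducing the problem to computing a socle dimension, which then unwinds via the primary decomposition of $\n S$.

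First I would apply Theorem~\ref{main}: since $R$ is regular and $S$ is finitely generated projective over $R$, the $S$-injective module $E := E_S(S/\m)$ is also injective over $R$. Because $S$ is finite over $R$, the inclusion $R/\n \hookrightarrow S/\m$ is integral, so $R/\n$ is a field and $\n$ is a maximal ideal of $R$. Every element of $E$ is annihilated by a power of $\m$, and since $\n S \subseteq \m$, by a power of $\n$. Hence $E$ is $\n$-torsion, and the Matlis structure theorem over $R$ yields
\[
E \;\cong\; E_R(R/\n)^{(\mu)}
\]
for some cardinal $\mu$. It remains to identify $\mu$.

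Since $\soc_R E_R(R/\n) = R/\n$, we have $\mu = \dim_{R/\n}(0 :_E \n) = \dim_{R/\n} \Hom_S(S/\n S, E)$. Now $S/\n S$ is Artinian (finite over the field $R/\n$), and the $Q_i$ have pairwise distinct maximal radicals $\m_i$, so by the Chinese remainder theorem $S/\n S \cong \prod_{i=1}^{c} S/Q_i$. For $i \neq 1$ pick $x \in \m_i \setminus \m$: then $x$ is nilpotent on $S/Q_i$ but acts invertibly on $E$ (it is a unit in $S_\m$, and $E$ is an $S_\m$-module), forcing $\Hom_S(S/Q_i, E) = 0$. Thus $\Hom_S(S/\n S, E) = \Hom_S(S/Q_1, E) = (0 :_E Q_1)$.

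Call this last module $N$, and set $\kappa := S/\m$. The inclusion $\kappa \hookrightarrow E$ lands in $N$ and remains essential, while $N$ is injective as an $S/Q_1$-module (any lift to $E$ is automatically $Q_1$-torsion). Hence $N = E_{S/Q_1}(\kappa)$. Since $S/Q_1$ is Artinian local with residue field $\kappa$, Matlis duality over $S/Q_1$ gives $\ell_{S/Q_1}(N) = \ell_{S/Q_1}(S/Q_1)$. Every $S/Q_1$-composition factor of $N$ is $\kappa$, which as $R/\n$-vector space has dimension $[\kappa : R/\n]$, so
\[
\mu \;=\; \dim_{R/\n} N \;=\; [\kappa : R/\n]\cdot \ell_{S/Q_1}(S/Q_1) \;=\; \ell_R(S/Q_1),
\]
the last equality coming from the identical composition-factor count applied to $S/Q_1$ itself as an $R$-module.

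The main obstacle is the length bookkeeping across the three rings $R$, $S$, and $S/Q_1$ together with the field extension $\kappa/(R/\n)$: one has to verify that two a priori different computations of $\dim_{R/\n}$ — via $\Hom_S(S/Q_1, E)$ and via $\ell_R(S/Q_1)$ — collapse to the same product $[\kappa : R/\n] \cdot \ell_{S/Q_1}(S/Q_1)$. Once the identification $N = E_{S/Q_1}(\kappa)$ is in place, this is routine.
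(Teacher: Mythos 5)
Your proposal is correct and follows essentially the same route as the paper: apply Theorem \ref{main} and the Matlis structure theorem over $R$, identify the $\n$-socle of $E$ with $(0:_E Q_1)=\Hom_S(S/Q_1,E)$ by killing the other primary components (your CRT argument is just a repackaging of the paper's choice of $\xi\in Q_2\cap\cdots\cap Q_c\setminus\m$ acting invertibly on $E$), and finish with the same length bookkeeping $\ell_R(S/Q_1)=[S/\m:R/\n]\cdot\ell_S(S/Q_1)$.
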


We now describe in brief the contents of the paper. In section 1 we give many examples  of rings $S$ which satisfy the hypotheses for Theorem \ref{main}. In section 2 we prove Lemma 2. In section 3 we prove Theorem 1. 
In section 3 we also give application of our result in the theory of local cohomology; as described in the begininig of this section.
In section 4 we prove Theorem 3. In section 5 we prove Theorem 4. 
\section{Examples}
In this section we give many examples  of rings $S$ which satisfy the hypotheses for Theorem \ref{main}.

\textbf{Example 1:}  Let $K$ be a field of characteristic zero, $R = K[X_1,\ldots,X_n]$ and  let $S = A_n(K) = K<X_1,\ldots,X_n, \partial_1, \ldots, \partial_n>$  be the $n^{th}$ Weyl algebra over $K$. By \cite[p.\ 3, 1.2]{B}
it follows that every $s \in S$ has a unique expression
\[
s = \sum_{\alpha \in \mathbb{N}^n} \phi_{\alpha}(X)\bP^\alpha \quad \text{with}\ \phi_{\alpha}(X) \in R.
\]
In other words $S$ is free as a left $R$-module. The fact that $S$ is also free as a right $R$-module is also known. However due to lack of a reference we give a proof here.

(a) Using the defining relations of the Weyl algebra it is clear that any $s \in S$ can be written as
\begin{equation*}
s = \sum_{\alpha \in \mathbb{N}^n} \bP^\alpha \psi_{\alpha}(X) \quad \text{with}\ \psi_{\alpha}(X) \in R. \tag{$\dagger$}
\end{equation*}
(b) We prove that the above expression is unique. For this we need the following Lemma which is easy to prove
\begin{lemma}\label{temp}
Let $A$ be a ring containing a field $K$ and let $g$ be a $K$-linear derivation on $R$. Then in the ring $D_K(A)$
of $K$-linear differential operators, for any $a\ in A$ we have

$\displaystyle{\quad \ \ \ \ \ \ \ \ \ \ \ \ \ \ \ \ \ \ \ g^m a = \sum_{i = 0}^{m} \binom{m}{i}g^{m-i}(a)g^i.}$
\qed
\end{lemma}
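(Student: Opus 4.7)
The plan is a straightforward induction on $m$, exactly analogous to the proof of the Leibniz rule for higher derivatives. The only non-trivial ingredient is the base case $m = 1$, which amounts to rewriting the derivation identity as a commutation relation in $D_K(A)$.

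First I would establish the base case. The assumption that $g$ is a $K$-linear derivation on $A$ means that $g$ acts on $A$ (by multiplication operators identified with elements of $D_K(A)$) by the Leibniz rule $g(af) = g(a)f + a\,g(f)$ for all $a,f\in A$. Viewing both sides as operators on $A$, this gives the commutation relation $ga = g(a) + ag$ in $D_K(A)$, which is the case $m=1$ of the desired formula.

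Next, assume inductively that $g^m a = \sum_{i=0}^{m}\binom{m}{i} g^{m-i}(a) g^i$. Multiplying on the left by $g$ and applying the base case relation to each $g \cdot g^{m-i}(a)$, I obtain
\[
g^{m+1}a = \sum_{i=0}^{m}\binom{m}{i}\bigl(g^{m+1-i}(a) + g^{m-i}(a)\,g\bigr)g^i.
\]
After reindexing the second summand via $j = i+1$ and combining like terms using Pascal's identity $\binom{m}{i} + \binom{m}{i-1} = \binom{m+1}{i}$, together with the boundary values $\binom{m}{0} = \binom{m+1}{0}$ and $\binom{m}{m} = \binom{m+1}{m+1}$, the right-hand side collapses to $\sum_{i=0}^{m+1}\binom{m+1}{i} g^{m+1-i}(a) g^i$, completing the induction.

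There is no real obstacle here; the only point to be slightly careful about is that one is computing inside the associative ring $D_K(A)$, so one must interpret the equation $ga = g(a) + ag$ as identifying the operator ``left multiplication by $g(a) \in A$'' with an element of $D_K(A)$, rather than as a pointwise identity of $A$-valued functions. Once this convention is fixed, the induction is purely formal and essentially identical to the classical Leibniz rule.
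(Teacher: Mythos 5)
Your proof is correct: the base case $ga = g(a) + ag$ is exactly the operator form of the Leibniz rule, and the induction with Pascal's identity goes through as you describe. The paper omits the proof entirely (the lemma is declared ``easy to prove'' and closed with a \qed), and your induction is precisely the standard argument the author intended.
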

So let $s = \sum_{\alpha \in \mathbb{N}^n} \bP^\alpha \psi_{\alpha}(X) = 0$. If possible assume that some $\psi_{\alpha}(X) \neq 0$. Let $\alpha^* \in \mathbb{N}^n$ be such that $\psi_{\alpha^*}(X) \neq 0$ and $\psi_{\alpha}(X) = 0 $ for all $\alpha$ with $|\alpha| > |\alpha_*|$. Clearly $\alpha^* \neq 0$. Also using \ref{temp}
we get that
\[
s = \sum_{\alpha^*}  \psi_{\alpha^*}(X)\bP^{\alpha^*} + \sum_{|\alpha| < |\alpha_*|}c_\alpha  \bP^\alpha.
\]
Since $s = 0$ it follows that  $\psi_{\alpha^*}(X) = 0$ for all $\alpha^*$, a contradiction. Thus the expression in $(\dagger)$ is unique and so $S$ is free as a right $R$-module.

\textbf{Example 2:} Let $K$ be a field of characteristic zero, $R = K[[X_1,\ldots,X_n]]$ and  let $S$ = ring of $K$-linear differential operators on $R$. Then $S = R[\partial_1,\ldots,\partial_n]$.
Every $s \in S$ has a unique expression
\[
s = \sum_{\alpha \in \mathbb{N}^n} \phi_{\alpha}(X)\bP^\alpha \quad \text{with}\ \phi_{\alpha}(X) \in R.
\]
In other words $S$ is free as a left $R$-module. By an argument similar to that in Example 1 we can show that
there every $s \in S$ has a unique expression
\[
s = \sum_{\alpha \in \mathbb{N}^n} \bP^\alpha \psi_{\alpha}(X) \quad \text{with}\ \psi_{\alpha}(X) \in R.
\]
So $S$ is  free as a right $R$-module.

\textbf{Example 3:} $R = \mathbb{C}\{z_1,\ldots,z_n\}$ the ring of convergent power series with complex coefficients. Let $S$ = ring of $\mathbb{C}$-linear differential operators on $R$. Then $S = R[\partial_1,\ldots,\partial_n]$. As in Example 2 we can prove that $S$ is  free as a right $R$-module.

\textbf{Example 4:} $R$ any commutative regular ring and $S = R[X_1,\ldots,X_n]$. Then clearly $S$ is free as a right $R$-module.

\textbf{Example 5:} $R$ any commutative regular ring and $S = M_n(R)$; the ring of $n\times n$ matrices over $R$. Clearly $S \cong R^{n^2}$ as a right $R$-module. The ring $R$ can be considered as a subring of $R$ via scalar matrices. Note that $R \subseteq Z(S)$; the center of $S$.

\textbf{Example 6:}(A non-Noetherian example) Let $R = \mathbb{Z}$ and let $S \subseteq \mathbb{C}$ be the subring of all algebraic integers. Then $S$ is torsion-free $R$-module, so it is free.

\textbf{Example 7:}(Differential polynomial rings) $R$ any commutative regular ring and let $\delta \colon R \rt R$
be a derivation on $R$. Consider the differential polynomial ring $S = R[X; \delta]$. An element $s$ in $S$ has a unique expression $s = \sum_{i = 0}^{m} a_i X^i$. So $S$ is a free left $R$-module. Note that
$Xa = aX + \delta a$. We can prove that
\[
X^na = \sum_{i =0}^{n} \binom{n}{i} \delta^i(a) X^{n-i}.
\]
So by an argument similar to in Example 1 we can show that each element $s$ in $S$ has a unique expression $s = \sum_{i = 0}^{m}X^ib_i$.  It follows that $S$ is free as a right $R$-module.

\textbf{Example 8:}(Group rings) $R$ any commutative regular ring and let $G$ be a group. Consider the group ring $S =
R[G] = \bigoplus_{\sigma \in G} R\sigma$. Notice $r\sigma = \sigma r$ for any $r \in R$ and $\sigma \in G$. It follows that $R[G] = \bigoplus_{\sigma \in G} \sigma R$. It follows that $S$ is free as a right $R$-module. Also note that
$R \subseteq Z(S)$.

\textbf{Example 9:}  Let $K$ be a field of characteristic zero, $A = K[X_1,\ldots,X_n]$ and  let $f \in A$ be a non-constant polynomial. Set $R = A_f$ and let $S$ be the ring of $K$-linear differential operators on $R$. Then
$S = R[\partial_1,\ldots,\partial_n]$.  As in Example 1 we can prove that $S$ is a free as a right $R$-module.

\textbf{Example 10:} Let $S$ be a \CM \ affine algebra over  an infinite field $K$ and let $R$ be a Noether normalization of $S$. Then $R$ is a regular subring of $S$ and since $S$ is \CM, $S$ is projective as a $R$-module.
As all projectives over $R$ are free we get that $S$ is free as a $R$-module.

In Examples 1-10, $S$ was a free right $R$-module. We now give two examples when $S$ is only projective as a $R$-module.

\textbf{Example 11:} Let $R$ be a Dedekind  domain with quotient field $K$ and let $L$ be a finite field extension of $K$. Set $S$ to be the integral closure of $R$ in $L$. Then $S$ also a Dedekind ring. In general $S$ is a projective $R$-module and not-necessarily free. In fact if $K$ is a number-field with $R$ not a P.I.D then there always exists a finite extension $L$ with $S$ not-free as a $R$-module; see \cite{Mann}. For a specific example see \cite{MS}. The author thanks Sudhir Ghorpade for this specific example.

\textbf{Example 12:} Let $R$ be a regular domain having a projective module $N$ such that $S = \Hom_R(N,N)$ is not free
as a $R$-module. Clearly $S$ is projective as a $R$-module. Also note  that $R$ can be considered as a subring of $S$ via the map $i \colon R \rt S$ where $i(r)$ is the multiplication map. Clearly $R$ is in the center of $S$.
The following specific example was constructed by Manoj Keshari. Recall a projective module $P$ is said to be cancellative if $P\oplus R^n \cong Q\oplus R^n$ implies $P \cong Q$. Let $A $ be the homogeneous localization
$\mathbb{R}[X,Y,Z]_{(X^2 + Y^2 + Z^2)}$ and let $R = A_0$. Then $R$ is a smooth affine surface. The projective module
$K_R \oplus R$ is not cancellative (here $K_R$ denotes the canonical module of $R$); see \cite[Example 3.1]{Bhat}. Since
$K_R \oplus R$ is not cancellative then there exists a projective module $P$ with $K_R \oplus R \oplus R^r  \cong
P \oplus R^r$, but $ P \ncong K_R \oplus R$. It can be shown that $\Hom_R(P,P)$ is not free as a $R$-module.
\section{Proof of Lemma \ref{chief}}
In this section we give a proof Lemma \ref{chief}. We need the following result.
\begin{proposition}\label{cp}
Let $R$ be a Cohen-Macaulay commutative ring and let $P$ be a prime ideal in $R$ with   $\hh P \geq g \geq 1$. Then there exists an $R$-sequence $x_1,\ldots,x_g \in P$ such that $\frac{x_1}{1},\ldots,\frac{x_g}{1}$ is part of minimal generators of $PR_P$ in the local ring $R_P$.
\end{proposition}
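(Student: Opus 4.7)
The plan is to construct $x_1,\ldots,x_g$ inductively, using prime avoidance at each stage so that the chosen element is simultaneously a non-zero-divisor on the current quotient and ``fresh'' modulo $P^2R_P$ together with what has already been chosen.

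First I would fix notation. For a subset $T \subseteq R$, write $\overline{T}$ for its image in $R_P$. Suppose $x_1,\ldots,x_i \in P$ (with $0 \leq i < g$) have already been selected so that they form an $R$-sequence and so that $\overline{x_1},\ldots,\overline{x_i}$ extend to a minimal generating set of $PR_P$; for $i = 0$ there is nothing to assume. I want to produce $x_{i+1} \in P$ that preserves both properties. Set
\[
J_i \;=\; \{\, r \in R : \overline{r} \in P^2R_P + (\overline{x_1},\ldots,\overline{x_i})R_P \,\}.
\]
Then an element $x_{i+1} \in P$ works provided $x_{i+1}$ avoids every associated prime of $R/(x_1,\ldots,x_i)$ (to guarantee regularity) and $x_{i+1} \notin J_i$ (to guarantee that $\overline{x_{i+1}}$ is independent of $\overline{x_1},\ldots,\overline{x_i}$ modulo $P^2R_P$, i.e.\ that $\overline{x_1},\ldots,\overline{x_{i+1}}$ still extend to minimal generators of $PR_P$ by Nakayama).

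The key step is to verify that $P$ is not contained in the union $J_i \cup \bigcup_{Q \in \Ass R/(x_1,\ldots,x_i)} Q$, so that prime avoidance produces the desired $x_{i+1}$. For the associated primes: since $R$ is Cohen-Macaulay and $x_1,\ldots,x_i$ is an $R$-sequence, every $Q \in \Ass R/(x_1,\ldots,x_i)$ has height exactly $i$; since $\hh P \geq g > i$, we cannot have $P \subseteq Q$. For the ideal $J_i$: the assumption that $\overline{x_1},\ldots,\overline{x_i}$ are part of a minimal generating set of $PR_P$ means their images in $PR_P/P^2R_P$ are $R_P/PR_P$-linearly independent, so $P^2R_P + (\overline{x_1},\ldots,\overline{x_i})R_P \subsetneq PR_P$ (using also that $i < g \leq \hh P = \dim R_P$, so not all of $PR_P$ can yet be generated). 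Hence $J_i \subsetneq P$. Prime avoidance now supplies $x_{i+1} \in P$ outside the finite union.

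The main obstacle, and the only place where some care is needed, is verifying the inclusion $J_i \subsetneq P$, i.e.\ that the submodule of $PR_P$ generated by $P^2R_P$ and $\overline{x_1},\ldots,\overline{x_i}$ has not already exhausted $PR_P$. This uses both that $R_P$ is Cohen-Macaulay (so $\dim R_P = \hh P \geq g$) and the inductive hypothesis that the previously chosen elements are part of a minimal generating set, combined with Nakayama's lemma in $R_P$. Iterating the construction $g$ times yields the desired sequence.
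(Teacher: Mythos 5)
Your proposal is correct and follows essentially the same route as the paper: an induction in which, at each stage, prime avoidance is applied to the union of the associated primes of $R/(x_1,\ldots,x_i)$ (all of height $i$ by Cohen--Macaulayness) and the contraction to $R$ of $P^2R_P + (\overline{x_1},\ldots,\overline{x_i})R_P$, with properness of the latter in $P$ established via Nakayama and $\dim R_P = \hh P \geq g > i$. Your ideal $J_i$ is exactly the paper's $L = J \cap R$, so there is nothing substantive to distinguish the two arguments.
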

\begin{proof}
We prove the result by induction on $g$. We first consider the case when  $g = 1$. Consider $P^{(2)} = P^2R_P \cap R$.
Note that $P^{(2)} \subseteq P$. If $P \subseteq P^{(2)}$ then $P = P^{(2)}$. So $PR_P = P^2R_P$. By Nakayama Lemma $PR_P =0$. Thus $\hh P = \dim R_P = 0$ a contradiction since $\hh P \geq g = 1$. Let $Q_1,\ldots, Q_s$ be minimal primes of $R$. Since $R$ is \CM \ they are also all the associate primes of $R$. So $P \nsubseteq Q_i$ for $i = 1,\ldots,s$.
Also we have shown that $P \nsubseteq P^{(2)}$. So by prime avoidance, \cite[Lemma 3.3]{E} there exists
\[
x_1 \in P \setminus \left( P^{(2)} \cup \left( \cup_{i = 1}^{s} Q_i  \right) \right).
\]
Clearly $x_1$ is $R$-regular. Also $\frac{x_1}{1} \notin P^2R_P$ since $x_1 \notin P^{(2)}$. Thus $\frac{x_1}{1}$
is part of minimal system of generators of $PR_P$.

We assume the result for $g = i$ and prove the result for $g = i + 1$. Let $P$ be a prime ideal with $\hh P \geq g = i+ 1$. By induction hypotheses there exists $x_1,\ldots,x_i \in P$ such that $x_1,\ldots,x_i$ is a $R$-regular sequence and $\frac{x_1}{1},\ldots,\frac{x_i}{1}$ is part of minimal generators of $PR_P$. Clearly $\hh (x_1,\ldots,x_i) = i$.
Let
\[
\{ Q_1,\ldots, Q_r \} = \Min \frac{R}{(x_1,\ldots,x_i)} = \Ass \frac{R}{(x_1,\ldots,x_i)}.
\]
The last equality holds since $R$ is \CM. By assumption
$\hh P \geq i +1$. So $P \nsubseteq Q_j$ for all $j = 1,\ldots, r$. Set
\[
J = <\frac{x_1}{1},\ldots,\frac{x_i}{1}>R_P + P^2R_P
\]
and $L = J \cap R$. Note that $L \subseteq P$. We claim that $P \nsubseteq L$. Otherwise $P = L$.
So $PR_P = J$. Therefore
\[
\frac{PR_P}{P^2R_P} = <\ov{\frac{x_1}{1}},\ldots,\ov{\frac{x_i}{1}}>.
\]
This implies $\dim R_P \leq i$ a contradiction since   $\dim R_P = \hh P \geq i+1$. Thus $P \nsubseteq L$.
By prime-avoidance there exists
\[
x_{i+1} \in P \setminus \left( L \cup \left( \cup_{i = 1}^{r} Q_i  \right) \right).
\]
Clearly $x_1,\ldots,x_{i+1}$ is a $R$-regular sequence and $\frac{x_1}{1},\ldots,\frac{x_{i+1}}{1}$ is part of minimal generators of $PR_P$. Thus by induction our result is true.
\end{proof}

We now give
\begin{proof}[Proof of Lemma \ref{chief}]
Let $P$ be a prime ideal in $R$. Set $\kappa(P) = R_P/PR_P$.
We first show
\begin{equation*}
\Ext^{1}_{R_P}\left( \kappa(P), E_P \right) = 0. \tag{$\dagger$}
\end{equation*}
First consider the case when $P$ is a minimal prime of $R$. Since $R$ is a regular ring, $R_P$ is a field. So $PR_P = 0$. Thus $\kappa(P) = R_P$. Clearly
\[
\Ext^{1}_{R_P}\left( R_P , E_P \right) = 0.
\]
Suppose $\hh P = g \geq 1$. Then by Proposition \ref{cp} there exists $x_1,\ldots,x_g \in P$ such that $x_1,\ldots,x_g$ is a $R$-regular sequence and $\frac{x_1}{1},\ldots,\frac{x_g}{1}$ is part of minimal generators of $PR_P$. Set $J = (x_1,\ldots,x_g)$. Since $R$ is regular, $R_P$ is a regular local ring. It follows that $PR_P$ is minimally generated by $g$ elements. Thus $JR_P = PR_P$. By our hypothesis $\Ext^{1}_{R}(R/J, E) = 0$. Localizing we get
\[
\Ext^{1}_{R_P}\left( \kappa(P), E_P \right) = \Ext^{1}_{R_P}(R_P/JR_P, E_P) = 0.
\]
Thus we have shown $(\dagger)$ for every prime ideal $P$ of $R$.

Let $0 \rt E \rt I^0 \rt I^1 \rt \cdots $ be a minimal injective resolution of $E$. We have, see \cite[18.7]{M},
\[
I^1 = \bigoplus_{P \in \Spec(R)} E(R/P)^{\mu_1(P,M)} \quad \text{where} \ \mu_1(P,M) = \dim_{\kappa(P)}\Ext^{1}_{R_P}\left( \kappa(P), E_P \right).
\]
So $I^1 = 0$. It follows that $E \cong I^0$. Thus $E$ is an injective $R$-module.
\end{proof}

\section{Proof of Theorem \ref{main}}
In this section we give a proof of Theorem \ref{main}. We also give application of our result to our problem local cohomology  We need the following lemma
\begin{lemma}\label{reg}
Let $A$ be a commutative ring and let $P$ be a projective $A$-module. Let $x_1,\ldots,x_n$ be a $A$-regular sequence. Then $x_1,\ldots,x_n$ is a $P$-regular sequence.
\end{lemma}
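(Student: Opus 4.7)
The plan is to reduce to the case of a free module, and then to descend from $F$ back to $P$ via the direct-summand structure. Since $P$ is projective, I choose an $A$-module $Q$ such that $F = P \oplus Q$ is a free $A$-module, say $F \cong A^{(I)}$.

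First I would handle the free case by induction on $n$. Multiplication by an element of $A$ acts componentwise on $A^{(I)}$, so any $A$-nonzerodivisor is automatically an $F$-nonzerodivisor; in particular $x_1$ is $F$-regular. The inductive step uses the observation that
\[
F/x_1 F \;\cong\; (A/x_1A)^{(I)}
\]
is free as an $A/x_1A$-module, and by hypothesis $x_2,\ldots,x_n$ is an $A/x_1A$-regular sequence. Applying the inductive hypothesis with the base ring $A/x_1A$ and the free module $F/x_1F$ shows that $x_2,\ldots,x_n$ is regular on $F/x_1F$, completing this step.

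Second I would descend from $F$ back to $P$. Because tensor products distribute over direct sums, for every $k \le n$ one has
\[
F/(x_1,\ldots,x_k)F \;\cong\; P/(x_1,\ldots,x_k)P \;\oplus\; Q/(x_1,\ldots,x_k)Q.
\]
A nonzerodivisor on a direct sum is a nonzerodivisor on each summand: if $a\bar m = 0$ in $M$, then $a(\bar m,0) = 0$ in $M\oplus N$, forcing $\bar m = 0$. Applying this observation with $a = x_{k+1}$ for $k = 0,1,\ldots,n-1$ yields that $x_1,\ldots,x_n$ is $P$-regular.

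I do not anticipate a substantive obstacle; the argument is formal, and the main point is simply to set up the two reductions cleanly. The only subtlety worth attention is the inductive step in the free case, where one must use that $F/x_1F$ is free \emph{as an $A/x_1A$-module} (not merely as an $A$-module), since that is what allows the induction to continue with the shortened regular sequence over the quotient ring.
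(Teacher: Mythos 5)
Your argument correctly establishes that $x_1,\ldots,x_n$ is a \emph{weak} $P$-regular sequence: the free case and the descent of the nonzerodivisor conditions through the decomposition $F/(x_1,\ldots,x_k)F \cong P/(x_1,\ldots,x_k)P \oplus Q/(x_1,\ldots,x_k)Q$ are both fine, and this is essentially the first half of the paper's proof (the paper simply asserts the free case as clear). But you never verify the remaining clause in the definition of a regular sequence, namely that $(x_1,\ldots,x_n)P \neq P$. This is not a formality that descends from $F$: knowing $(x_1,\ldots,x_n)F \neq F$ only tells you that the proper-ness fails for at most one of the two summands, i.e.\ that $(x_1,\ldots,x_n)P \neq P$ \emph{or} $(x_1,\ldots,x_n)Q \neq Q$, and a priori it could be $Q$ that carries this condition. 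Indeed the phenomenon is real for projective modules: with $A = k[x]\times k[y]$, $P = 0 \times k[y]$ and the $A$-regular element $(x,1)$, one has $(x,1)P = P$ even though $P \neq 0$ and $((x,1))$ is a proper ideal. So some additional input is genuinely required to rule this out.

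The paper closes this gap by a localization argument: since $(x_1,\ldots,x_n) \neq A$ there is a maximal ideal $\mathfrak{m}$ containing it, and $P_{\mathfrak{m}}$ is a free $A_{\mathfrak{m}}$-module (projectives over local rings are free), whence $(x_1,\ldots,x_n)P_{\mathfrak{m}} \neq P_{\mathfrak{m}}$ by Nakayama, provided $P_{\mathfrak{m}} \neq 0$. You should add a step of this kind (or an equivalent support argument) to complete the proof; the direct-summand bookkeeping alone cannot deliver it.
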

\begin{proof}
Let $Q$ be a $A$-module with $P\oplus Q = F$, free. Clearly $x_1,\ldots,x_n$ is a $F$-regular sequence. It follows that
$x_1,\ldots,x_n$ is a weak $P$-regular sequence. It remains to prove that $(\bx)P \neq P$. Since $A \neq (\bx)$ it follows that there exists a maximal ideal $\m$ of $A$ containing $(\bx)$. Since $P_\m$ is a free $A_\m$-module, see \cite[Theorem 2.5]{M}, we have
that $(\bx)P_\m \neq P_\m$. So $(\bx)P \neq P$.
\end{proof}

\begin{remark}\label{right}
Let $A$ be a commutative ring.
Suppose $M$ is a right $A$-module and let $\bx = x_1,\ldots,x_n$ be a  sequence in $A$. Then for the Koszul complex
$\K(\bx;M)$ we consider elements in $K(\bx;M)_i$ as "row's" and not as columns as is the practice when $M$ is a left $R$-module. This is natural since $M$ is a right $A$-module. Also note that the maps in $K(\bx;M)$ are transposes of the usual maps when $M$ is a left $R$-module.
\end{remark}
\begin{proof}[Proof of Theorem \ref{main}]
By Lemma \ref{chief} it suffices to show that $\Ext_{R}^{1}(R/J,E) = 0$ for every ideal $J$
 generated by regular sequence.

 Let $J = (a_1,\ldots,a_g)$ where $a_1,\ldots,a_g$ is a $R$-regular sequence and let $\phi \colon J \rt E$ be a $R$-linear map. We want to prove that  there exists $R$-linear map $\widetilde{\phi} \colon R \rt E$ with $\widetilde{\phi}_J = \phi$.

 Set $L = SJ$ the left ideal in $S$ generated by $J$. Define
 \begin{align*}
 \psi \colon L &\rt E \\
 x \in L; \text{if} \ x &= s_1a_1 + \cdots + s_ga_g \ \text{then}\\
 \psi(x) &= s_1\phi(a_1) + \cdots + s_g\phi(a_g).
 \end{align*}

\textit{Sub-Lemma:}  $\psi$ is well-defined, i.e.,
\[
 \text{if} \ x = s_1a_1 + \cdots + s_ga_g = t_1a_1 + \cdots + t_ga_g
\]
\[
\text{then} \ s_1\phi(a_1) + \cdots + s_g\phi(a_g) = t_1\phi(a_1) + \cdots + t_g\phi(a_g).
\]
We first assume Sub-Lemma. Note that $\psi$ is $S$-linear. For if $x =  s_1a_1 + \cdots + s_ga_g$
then for $t \in S$ we have $tx = ts_1 a_1 + \cdots + t s_g a_g$. So
\[
\psi(tx) = ts_1\phi(a_1) + \cdots + ts_g\phi(a_g) = t \psi(x).
\]
Since $E$ is an injective $S$-module, there exists an $S$-linear map $\widetilde{\psi} \colon S \rt E$ with $\widetilde{\psi}_L = \psi$.

Note that $\psi_J = \phi$; for if $a \in J$ then $a = r_1 a_1 + \cdots + r_g a_g$ for some $r_1,\ldots,r_g \in R$.
So
\[
\psi(a) = r_1\phi(a_1) + \cdots + r_g\phi(a_g) = \phi(a), \quad \text{since} \ \phi \ \text{is $R$-linear}.
\]
Let $i \colon R \rt S$ be the inclusion map. Set $\widetilde{\phi} = \widetilde{\psi} \circ i$. Note that
$\widetilde{\phi} \colon R \rt E$ is $R$-linear and clearly $\widetilde{\phi}_J = \phi$. Thus it remains to prove the
sub-lemma.

\textit{Proof of Sub-Lemma:} By Lemma \ref{reg} we have that $a_1,\ldots,a_g$ is a $S$-regular sequence; here $S$ is considered as a right $R$-module.  For the convenience of the reader we first give the proof when $g = 1, 2$ and then give a
general argument.

First consider the case when $g = 1$. So $J = (a_1)$ and $L = SJ = Sa_1$. Let $x \in L$. If $x = s_1a_1 = t_1a_1$, then as $a_1$ is $S$-regular we have $s_1 = t_1$. So $s_1\phi(a_1) = t_1 \phi(a_1)$.

Next consider the case when $g = 2$. So $J = (a_1,a_2)$ and $L = SJ$.  Let $x \in L$. If
$$x = s_1a_1 + s_2 a_2 = t_1a_1 + t_2a_2; $$
then as $a_1, a_2$ is a $S$-regular sequence there exists $c \in S$ with  $s_1 = t_1 - ca_2$ and $s_2 = t_2 + ca_2$. Thus we have
\[
s_1\phi(a_1) + s_2 \phi(a_2) = t_1\phi(a_1) + t_2 \phi(a_2) - c\left(a_2\phi(a_1) - a_1\phi(a_2) \right).
\]
Since $\phi$ is $R$-linear we have that $a_2\phi(a_1) - a_1\phi(a_2) = 0$. Thus the result follows when $g = 2$.

For the general argument, consider the Koszul complex $\K(\ba;S)$. Before proceeding further please read Remark \ref{right}. Let $J = (a_1,\ldots,a_g)$, with $g \geq 2$, and let $L = SJ$.
Let the Koszul maps be given as
\[
\cdots S^{\binom{g}{2}} \xrightarrow{\psi_g} S^g \xrightarrow{\phi_g} S \rt 0
\]
Let $x \in L$. If
\[
x = s_1a_1 + s_2 a_2 + \cdots s_g a_g = t_1a_1 + t_2a_2 + \cdots t_g a_g;
\]
then $u = [s_1 -t_1, s_2 -t_2,\cdots,s_g -t_g] \in \ker \phi_g$. Since $\ba$ is a $S$-regular sequence;  $\K(\ba;S)$ is acyclic. So $u \in \image \psi_g$. Say $u = c\psi_g$ for some $c \in S^{\binom{g}{2}}$. Let
\[
w_g = [\phi(a_1),\ldots,\phi(a_g)]^{tr} \quad \text{where "tr" denotes transpose}.
\]
Then note that
\[
\sum_{i = 1}^{g}(s_i -t_i)\phi(a_i) = uw_g = c\psi_g w_g.
\]
Thus it is sufficient to prove $\psi_g w_g = 0$. This we prove by induction on $g$ where $g \geq 2$.
We first consider the case when $g = 2$. Notice that
\[
\psi_2 = [-a_2,a_1].
\]
So $\psi_2w_2 = -a_2\phi(a_1) + a_1 \phi(a_2) = 0$, since $\phi$ is $R$-linear.

We assume the result when $g = r-1$ and prove it when $g = r$; (here $r \geq 3$).
Notice that
\[
\psi_{r} = \left(
\begin{matrix}
-a_2&a_1&0 &0\ldots 0  &0\\
-a_3&0 &a_1 &0 \ldots 0 &0\\
\cdots \\
-a_r&0 &0 &0 \ldots 0 &a_1 \\
\BigZero &\widetilde{\psi_{r-1}} \\
\end{matrix}
\right)
\]
Here $\widetilde{\psi_{r-1}}$ is the map in degree 2 of the Koszul complex on $S$ \wrt \ $a_2,\ldots, a_r$. Set
$\widetilde{w_{r-1}} = [\phi(a_2),\ldots,\phi(a_r)]^{tr}$. By induction hypothesis we have that $\widetilde{\psi_{r-1}}
\widetilde{w_{r-1}} = 0$. Notice
\[
\psi_rw_r = \left(
\begin{matrix}
-a_2\phi(a_1) + a_1\phi(a_2)\\
-a_3\phi(a_1) + a_1\phi(a_3) \\
\cdots \\
-a_r\phi(a_1) +a_1\phi(a_r) \\
\widetilde{\psi_{r-1}}\widetilde{w_{r-1}} \\
\end{matrix}
\right)
\]
Since $\phi$ is $R$-linear and $\widetilde{\psi_{r-1}}\widetilde{w_{r-1}} = 0$ we get that $\psi_r w_r = 0$.
\end{proof}
As an application of our result we prove the following:
\begin{proposition}\label{app}
Let $K$ be a field of characteristic zero, $R = K[X_1,\ldots,X_n]$ and let $I, J$ be  ideals in $R$. Let $A_n(K) = K<X_1,\ldots,X_n, \partial_1, \ldots, \partial_n>$  be the $n^{th}$ Weyl algebra over $K$. Then the  Mayer-Vietoris sequence
\[
\cdots \rt H^{i}_{I+J}(R) \xrightarrow{\rho^i} H^i_I(R) \oplus H^i_J(R) \xrightarrow{\pi^i} H^i_{I\cap J}(R) \xrightarrow{\delta^i} H^{i+1}_{I+J}(R) \rt \cdots
\]
is a sequence of $A_n(K)$-modules.
\end{proposition}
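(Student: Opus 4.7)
The plan is to realize the Mayer-Vietoris sequence as the long exact cohomology sequence of a short exact sequence of complexes of $A_n(K)$-modules, so that each connecting map $\delta^i$ is automatically $A_n(K)$-linear. Since $\rho^i$ and $\pi^i$ are already known to be $A_n(K)$-linear, this will suffice.

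First I would take a resolution $0 \to R \to F^0 \to F^1 \to \cdots$ in which each $F^i$ is an injective left $A_n(K)$-module. By Example 1, $A_n(K)$ is (free, hence) projective as a right $R$-module, so Theorem \ref{main} applies and each $F^i$ is also injective as an $R$-module. Thus $F^\bullet$ is at once a complex in $A_n(K)$-modules and an $R$-injective resolution of $R$, and so it computes $H^\bullet_L(R)$ for every ideal $L$ of $R$.

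Next I would verify that $\Gamma_L(F^i)$ is an $A_n(K)$-submodule of $F^i$ for every ideal $L$. This uses condition $(*)$, which for differential operators is essentially immediate: given $m$ with $L^k m = 0$ and any $s \in A_n(K)$, choose $r$ with $(L^k)^r s \subseteq A_n(K) \cdot L^k$; then $L^{kr}(sm) \subseteq A_n(K) \cdot L^k m = 0$. Hence $\Gamma_L(F^\bullet)$ is a subcomplex of $A_n(K)$-modules, and the standard inclusions/projections give a sequence
\[
0 \to \Gamma_{I+J}(F^\bullet) \to \Gamma_I(F^\bullet) \oplus \Gamma_J(F^\bullet) \to \Gamma_{I\cap J}(F^\bullet) \to 0
\]
of $A_n(K)$-linear morphisms with $x \mapsto (x,x)$ and $(x,y) \mapsto x-y$. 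Exactness on the left and in the middle is formal, using $\Gamma_I \cap \Gamma_J = \Gamma_{I+J}$.

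Surjectivity on the right is the step where the $R$-injectivity of $F^i$ enters: writing $F^i = \bigoplus_P E_R(R/P)^{\mu_P}$, one has $\Gamma_L(E_R(R/P)) = E_R(R/P)$ when $L \subseteq P$ and zero otherwise, and since a prime containing $I \cap J$ must contain $I$ or $J$, each component of an element of $\Gamma_{I\cap J}(F^i)$ lies in $\Gamma_I(F^i)$ or $\Gamma_J(F^i)$. Taking cohomology now yields a long exact sequence of $A_n(K)$-modules whose connecting maps are $A_n(K)$-linear, and by the standard uniqueness of $\delta$-functorial Mayer-Vietoris sequences for local cohomology this coincides (up to sign) with the one in the statement. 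The main obstacle is precisely this surjectivity step: without Theorem \ref{main} the identity $\Gamma_I(F^i) + \Gamma_J(F^i) = \Gamma_{I\cap J}(F^i)$ would have no reason to hold, and it is the simultaneous $R$- and $A_n(K)$-injectivity of $F^i$ that makes the whole argument work.
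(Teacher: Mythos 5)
Your proposal is correct and follows essentially the same route as the paper: take an injective resolution of $R$ over $A_n(K)$, invoke Theorem \ref{main} so that it is simultaneously an $R$-injective resolution, observe via condition $(*)$ that each $\Gamma_L(F^i)$ is an $A_n(K)$-submodule, and read off the Mayer--Vietoris sequence from the resulting short exact sequence of complexes of $A_n(K)$-modules. The only difference is that you prove the exactness (in particular the surjectivity, via the decomposition of the $R$-injectives) where the paper simply cites the literature.
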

\begin{proof}
Set $S = A_n(K)$.
Recall that for an $R$-module $N$ and an ideal $I$ in $R$ the $I$-torsion module of $N$ is
\[
\Gamma_I(N) = \{ m \in N \mid I^sm = 0 \ \text{for some} \ s \geq 1; \text{$s$ depending on $m$} \}.
\]

Let $E^{\bullet}$ be an injective resolution of $R$ considered as a $S$-module. Note that the following sequence  of complexes of $R$-modules
\[
0 \rightarrow \Gamma_{I+J}(E^{\bullet})  \rightarrow \Gamma_{I}(E^{\bullet}) \oplus \Gamma_{J}(E^{\bullet})   \rightarrow \Gamma_{I\cap J}(E^{\bullet}) \rightarrow 0,
\]
 is exact, see \cite[page 154]{a7}.
It can be easily verified that if $M$ is a $S$-module then $\Gamma_I(M)$ is a $S$-submodule of $M$. Thus the above sequence is an exact sequence
of a complex of $S$-modules. The result follows.
\end{proof}
\section{Indecomposable injective $S$-modules}
\s \label{As} {\textbf{Assumptions:}} In this section we assume that $S$ is a left Noetherian ring containing a regular commutative ring $R$ as a subring. Assume that $S$ considered as a right $R$-module is projective.   We make a further assumption:

$(*)$  Given an ideal $I$ in $R$ and $s \in S$, there exists $r \geq 1$ ($r$ depending on $s$) such that $I^rs \subseteq SI$.

In Theorem \ref{main} we proved that every injective left $S$-module $E$ is an injective $R$-module. In this section we investigate the structure of $E$.

\begin{remark}
The assumption $(*)$ above is trivially satisfied if $R \subseteq Z(S)$. So our examples 4,5, 8, 10, 11, 12 trivially satisfy
our hypothesis. It takes a little work to show that examples 1,2,3, 7, 9 satisfy $(*)$. We prove it for example 7. The other cases being similar.  Let $R$ be any commutative regular ring and let $\delta \colon R \rt R$
be a derivation on $R$. Consider the differential polynomial ring $S = R[X; \delta]$.
An element $s$ in $S$ has a unique expression $s = \sum_{i = 0}^{m} a_i X^i$. It suffices to take  $s = aX^m$. Note that
$Xb = bX + \delta(b)$ for any $b \in R$. We can prove that
\[
bX^n = \sum_{i =0}^{n}(-1)^i \binom{n}{i} X^{n-i}\delta^i(b).
\]
So if $b \in I^{m+1}$ then $\delta^i(b) \in I$ for all $ i = 0,\ldots, m$. Notice that
\[
bs = baX^m  = ab X^m = \sum_{i =0}^{m}(-1)^i \binom{m}{i} aX^{n-i}\delta^i(b) \in SI.
\]
Thus $I^{m+1}s \subseteq SI$.
\end{remark}

The significance of our assumption (*) is the following result.

\begin{proposition}\label{Gamma}
(with assumptions as in \ref{As}). For any ideal $I$ in $R$ and a $S$-module $M$ we have that $\Gamma_I(M)$ is a
$S$-submodule of $M$.
\end{proposition}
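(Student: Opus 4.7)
The plan is to verify directly that $\Gamma_I(M)$ is stable under the left action of $S$; since it is visibly closed under addition and under the action of $R$, this will finish the proof. Fix $m \in \Gamma_I(M)$ and $t \in S$. By definition there exists $s \geq 1$ with $I^s m = 0$, and the goal is to produce an integer $N \geq 1$ such that $I^N(tm) = 0$ in $M$.

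The key step is to apply hypothesis $(*)$ \emph{not} to $I$ itself but to the ideal $I^s$ (still an ideal of $R$) together with the element $t \in S$. This yields $r \geq 1$ with
\[
I^{sr}\cdot t \;=\; (I^s)^r\cdot t \;\subseteq\; S\cdot I^s
\]
inside $S$. Now for any $y \in I^{sr}$ we may write $yt = \sum_k u_k v_k$ with $u_k \in S$ and $v_k \in I^s$. Using associativity of the $S$-action on $M$ and the fact that $v_k m = 0$ for each $k$, we obtain
\[
y(tm) \;=\; (yt)m \;=\; \sum_k u_k(v_k m) \;=\; 0.
\]
Hence $I^{sr}(tm) = 0$, so $tm \in \Gamma_I(M)$, as required.

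I do not anticipate a genuine obstacle here: hypothesis $(*)$ was designed precisely to let one push powers of an ideal of $R$ past an element of $S$ into left multiples of that ideal, and the only small twist is to feed $(*)$ the ideal $I^s$ rather than $I$ so that the resulting containment matches the annihilator of $m$.
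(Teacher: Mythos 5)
Your argument is correct and is essentially identical to the paper's proof: both apply hypothesis $(*)$ to the power $I^s$ annihilating $m$ (the paper's $J = I^l$) rather than to $I$ itself, obtaining $I^{sr}t \subseteq SI^s$ and hence $I^{sr}(tm) = 0$. No issues.
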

\begin{proof}
Clearly $\Gamma_I(M)$ is a
$R$-submodule of $M$. So let $s \in S$ and $m \in \Gamma_I(M)$. Say $I^l m = 0$. Set $J = I^l$. By our assumption
$(*)$ we have that there exists $r \geq 1$ such that $J^rs \subseteq SJ$. So $J^rsm =0$. Thus $I^{rl}sm = 0$. Therefore $sm \in \Gamma_I(M)$.
\end{proof}

\s \emph{Disussion:} Let $M$ be a $S$-module and consider the injective hull $E_S(M)$. Now $\Ass_R E_S(M)$ is a non-empty set and as $R$ is Noetherian $\Ass_R E_S(M)$ has maximal elements \wrt \ inclusion.

\begin{proposition}\label{maxA}
(with assumptions as in \ref{As}). Let $M \subseteq N$ be an essential extension of $S$-modules. Let $P \in \Ass_R N$ be a maximal element in $\Ass_R N$. Then $P \in \Ass_R M$. In particular this result holds when $N = E_S(M)$, the injective hull of $M$ as a $S$-module.
\end{proposition}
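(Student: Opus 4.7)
The plan is to pick a witness $n\in N$ with $\ann_R(n)=P$, push it into $M$ using essentiality of the extension as $S$-modules, and then use the maximality of $P$ to upgrade the resulting element in $M$ to one whose $R$-annihilator is exactly $P$. Concretely, choose $n\in N$ with $\ann_R(n)=P$. The $S$-submodule $Sn\subseteq N$ is non-zero, so by the essentiality hypothesis there exists $s\in S$ with $0\ne sn\in M$; set $m:=sn\in M$.

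The central obstacle is that $m$ has the form $sn$, and because $R$ and $S$ do not commute we cannot simply read off the $R$-annihilator of $m$ from that of $n$. This is precisely where assumption $(*)$ enters. Applied to the ideal $P$ and the element $s$, it yields $r\ge 1$ with $P^rs\subseteq SP$. Since $Pn=0$, this gives
\[
P^rm \;=\; P^r s\, n \;\subseteq\; SP\,n \;=\; 0,
\]
so $m\in\Gamma_P(N)$ and $P\subseteq \sqrt{\ann_R(m)}$. Consequently every $Q\in\Ass_R(Rm)$ satisfies $P\subseteq Q$. But $Rm\subseteq N$, hence $\Ass_R(Rm)\subseteq\Ass_R(N)$, and the maximality of $P$ in $\Ass_R N$ forces $Q=P$ for each such $Q$. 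Since $R$ is Noetherian and $Rm\ne 0$, $\Ass_R(Rm)$ is non-empty, so we have $\Ass_R(Rm)=\{P\}$. Therefore some $m'\in Rm$ has $\ann_R(m')=P$, and because $M$ is in particular an $R$-submodule of $N$, $m'\in M$; thus $P\in\Ass_R M$, as required.

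The final sentence of the statement is immediate: the inclusion $M\hookrightarrow E_S(M)$ is, by definition of injective hull, an essential extension of $S$-modules, so the preceding argument applies.

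The only genuinely delicate point is the passage $m=sn\rightsquigarrow P^rm=0$; once $(*)$ is invoked, the remaining steps (identifying associated primes of $Rm$ inside $\Ass_R N$ and using the maximality of $P$) are standard commutative-algebra manipulations. One should verify en route that the essentiality hypothesis is indeed used in the category of $S$-modules (not just $R$-modules), since it is the $S$-submodule $Sn$ that must meet $M$.
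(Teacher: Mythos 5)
Your proof is correct and follows essentially the same route as the paper: take a witness $n$ with $\ann_R(n)=P$, apply essentiality of $M\subseteq N$ in the category of $S$-modules to the submodule $Sn$ to obtain $0\ne m=sn\in M$, and invoke $(*)$ to conclude $P^{r}m=0$. The only divergence is the endgame: the paper localizes at $P$ and extracts an explicit element of $M_P$ annihilated by $PR_P$, whereas you note that $\Ass_R(Rm)$ is non-empty, contained in $\Ass_R N$, and consists of primes containing $P$, so maximality forces $\Ass_R(Rm)=\{P\}$ --- a slightly cleaner finish to the same argument.
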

\begin{proof}
Let $P = (0 \colon x)$ for some non-zero $x \in N$. Notice $Sx \supseteq Rx \neq 0$. Since $N$ is an essential extension of $M$ we have that
$Sx \cap M \neq 0$. So there exists $s \in S$ with $sx \in M$ and $sx \neq 0$. Let
\[
\mathcal{F} = \{ (0 \colon t)  \mid t \in N, t\neq 0 \}
\]
Maximal elements in $\mathcal{F}$ are  the associate primes of $N$. Note that $(0 \colon sx) \in \mathcal{F}$. So $(0\colon sx) \subseteq Q$
for some $Q \in \Ass_R N$.

By our assumption (*) we have that $P^r s \subseteq SP$ for some $r \geq 1$. As $Px = 0$ we obtain that $P^r sx = 0$. Thus $P^r \subseteq Q$. Since $Q$ is prime we have that $P \subseteq Q$. By our choice of $P$ we have that $Q = P$.

Set $y = sx$. Thus we have $P^r \subseteq (0 \colon y) \subseteq P$. We localize at $P$. So we have
\[
P^r R_P \subseteq (0 \colon_{R_P} y) \subseteq P R_P.
\]
As $P^rR_P y = 0$ and $y \neq 0$, there exists $i \geq 1$ with $P^{i-1}R_Py \neq 0$ and $P^iR_Py = 0$. Let $u \in P^{i-1}R_Py$ be non-zero. Then $PR_Pu = 0$. Since $PR_P$
is maximal ideal in $R_P$ we get that $(0 \colon u) = PR_P$. Thus $PR_P$ is associate to $M_P$. Therefore $P$ is an associate prime of $M$.
\end{proof}

Since $S$ is left Noetherian, every injective left module over $S$ is a direct sum of indecomposable injective modules.
We prove Theorem \ref{ind-I}. For the convenience of the reader we restate it here.

\begin{theorem}\label{ind}
(with assumptions as in \ref{As}). Let $M$ be an $S$-module with $E_S(M)$ an indecomposable injective $S$-module. Then
$\Ass_R E_S(M)$ has a unique maximal element $P$. Furthermore $P \in \Ass_R M$.
\end{theorem}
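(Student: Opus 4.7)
The plan is to derive uniqueness of the maximal element of $\Ass_R E_S(M)$ from the indecomposability of $E_S(M)$ as an $S$-module, by exhibiting two disjoint nonzero $S$-submodules if two distinct maximal elements existed. The $S$-submodules will be the $\Gamma_{P_i}$-torsions supplied by Proposition \ref{Gamma}. Once uniqueness is established, the claim $P \in \Ass_R M$ follows immediately from Proposition \ref{maxA}.

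Write $E = E_S(M)$. Since an indecomposable injective module is uniform, any two nonzero $S$-submodules of $E$ meet nontrivially. I would suppose, for contradiction, that $P_1, P_2 \in \Ass_R E$ are two distinct maximal elements; being maximal and distinct, they are incomparable. For each $i$, pick $x_i \in E$ with $\ann_R(x_i) = P_i$; then $x_i \in \Gamma_{P_i}(E)$, so the $S$-submodules $N_i := \Gamma_{P_i}(E)$ (which are $S$-submodules by Proposition \ref{Gamma}) are nonzero.

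The crux is to argue $N_1 \cap N_2 = 0$: any nonzero $x$ in the intersection satisfies $P_1^a x = 0 = P_2^b x$ for some $a,b \geq 1$, so $\sqrt{\ann_R(x)} \supseteq P_1 + P_2$. A minimal prime $Q$ over $\ann_R(x)$ then contains both $P_1$ and $P_2$; since $Rx \cong R/\ann_R(x)$ and minimal primes of an annihilator are associated primes, $Q \in \Ass_R(Rx) \subseteq \Ass_R E$. Maximality of $P_1$ in $\Ass_R E$ forces $Q = P_1$, so $P_2 \subseteq P_1$, contradicting incomparability. Thus $N_1 \cap N_2 = 0$ with both $N_i$ nonzero, violating uniformity of $E$. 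This forces uniqueness of the maximal element $P$, and then $P \in \Ass_R M$ by Proposition \ref{maxA}.

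The only step I expect to need real care is the translation of the torsion condition on an element of $N_1 \cap N_2$ into an associated prime of $E$ that strictly dominates $P_1$: one must invoke that minimal primes over the annihilator lie in $\Ass_R(Rx)$ and then use \emph{incomparability}, rather than mere distinctness, of the two maximal elements to close the contradiction. Everything else is either cited from Propositions \ref{Gamma} and \ref{maxA} or is a standard fact about indecomposable injectives.
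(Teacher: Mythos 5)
Your proof is correct, and it reaches the conclusion by a route whose mechanics differ from the paper's. The paper also rests on uniformity of the indecomposable injective $E_S(M)$ together with Propositions \ref{Gamma} and \ref{maxA}, but it argues one prime at a time: given two maximal elements $P,Q$ of $\Ass_R E_S(M)$, it first places $P$ in $\Ass_R M$ via Proposition \ref{maxA}, observes that $\Gamma_P(M)$ is a nonzero $S$-submodule of $M$ with $\Ass_R \Gamma_P(M)=\{P\}$, invokes indecomposability (via Matlis) to get $E_S(M)=E_S(\Gamma_P(M))$, and then applies Proposition \ref{maxA} a second time to force $Q\in\Ass_R\Gamma_P(M)=\{P\}$. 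You instead form the two torsion submodules $\Gamma_{P_1}(E)$ and $\Gamma_{P_2}(E)$ of $E$ itself and show they intersect trivially, contradicting uniformity directly; this uses Proposition \ref{maxA} only once (for the final claim $P\in\Ass_R M$) but needs the additional standard fact that minimal primes over $\ann_R(x)$ lie in $\Ass_R(Rx)$, which is valid here since $R$ is Noetherian. Both arguments are sound; yours could even be streamlined, since any associated prime of $Rx$ (and $\Ass_R(Rx)\neq\emptyset$ because $Rx\neq 0$) already contains $\sqrt{\ann_R(x)}\supseteq P_1+P_2$ and lies in $\Ass_R E$, so the detour through minimal primes is not actually needed to reach the contradiction with maximality.
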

\begin{proof}
Let $P, Q$ be maximal associate primes of $E_S(M)$. We prove that $P = Q$.
By \ref{maxA} $P$ is also an associate prime of $M$.
By \ref{Gamma} we have that $\Gamma_P(M)$  is a $S$-submodule of $M$.
Since $P$ is an associate prime of $M$ we have that $\Gamma_P(M) \neq 0$. Furthermore as $P$ is a maximal associate prime of $M$ it can be easily
verified that $\Ass_R \Gamma_P(M) = \{ P \}$. By \cite[2.2]{Matlis}, we have that $E_S(M) = E_S(\Gamma_P(M))$. Since $Q$ is a maximal associate prime
of $E_S(M)$, by \ref{maxA} we get $Q \in \Ass_R \Gamma_P(M) = \{ P \}$. So $Q = P$.
\end{proof}
Next we consider the injective hull of simple $S$ modules. Recall that an $S$-module $M$ is simple if the only submodules of $M$ are $0$ and $M$. It is easy to see that $M$ is simple if and only if $M \cong S/J$ where $J$ is a maximal left ideal in $S$.
\begin{proposition}
Let $J$ be a maximal left ideal in $S$. Then $I = J \cap R$ is a primary ideal in $R$. Say $I$ is $P$-primary. Then $\Ass_R S/J = \{ P \}$. Furthermore $P$ is the unique maximal element of $\Ass _R E_S(S/J)$.
\end{proposition}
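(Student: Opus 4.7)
My plan has three stages, with the crux being the very first. Once I show that $\Ass_R(S/J)$ is a singleton $\{P\}$, the primariness of $I=J\cap R$ will follow immediately from the inclusion $R/I\hookrightarrow S/J$ of $R$-modules, and the assertion about $E_S(S/J)$ will be essentially Theorem \ref{ind}.

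For the main step, I would begin by picking any $P\in \Ass_R(S/J)$; this set is nonempty because $R$ is Noetherian and $S/J\neq 0$. The key observation is that $\Gamma_P(S/J)$ is an $S$-submodule of $S/J$, by Proposition \ref{Gamma}, and it is nonzero because any element witnessing $P\in\Ass_R(S/J)$ lies in it. Since $S/J$ is a simple $S$-module, $\Gamma_P(S/J)=S/J$, so every element of $S/J$ is annihilated by some power of $P$. Now given any $Q\in\Ass_R(S/J)$ with $Q=(0:\overline{t})$ for a nonzero $\overline{t}\in S/J$, the fact that $\overline{t}$ is $P$-torsion gives $P^n\subseteq Q$ for some $n$, hence $P\subseteq Q$ by primality of $Q$. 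Running the same argument with the roles of $P$ and $Q$ reversed produces $Q\subseteq P$, so $P=Q$. This proves $\Ass_R(S/J)=\{P\}$.

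To conclude, $1\notin J$ forces $I=J\cap R$ to be a proper ideal, and the natural $R$-module injection $R/I\hookrightarrow S/J$ gives $\emptyset \neq \Ass_R(R/I)\subseteq \Ass_R(S/J)=\{P\}$; thus $\Ass_R(R/I)=\{P\}$, which is precisely the statement that $I$ is $P$-primary. Finally, since $J$ is maximal, $S/J$ is simple as an $S$-module and $E_S(S/J)$ is indecomposable, so Theorem \ref{ind} yields a unique maximal element of $\Ass_R E_S(S/J)$ lying in $\Ass_R(S/J)=\{P\}$; that element must be $P$ itself. The one genuinely nontrivial move is the middle paragraph: the whole argument hinges on hypothesis $(*)$ via Proposition \ref{Gamma}, because that is what converts $P$-torsion into an $S$-invariant notion and so allows the simplicity of $S/J$ to collapse $\Gamma_P(S/J)$ onto all of $S/J$.
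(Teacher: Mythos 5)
Your proof is correct, and its engine is the same as the paper's: Proposition \ref{Gamma} makes $\Gamma_P(S/J)$ an $S$-submodule, simplicity of $S/J$ forces $\Gamma_P(S/J)=S/J$, and Theorem \ref{ind} (applicable because a simple module is uniform, so its injective hull is indecomposable) handles the last assertion. Where you genuinely diverge is in the treatment of primariness of $I=J\cap R$, and in the logical order. The paper proves primariness \emph{first}, by a direct noncommutative computation: from $ab\in I$, $a\notin I$ it writes $b=j+sa$ using $J+Sa=S$, invokes $(*)$ for the principal ideal $Rb$ to get $b^rs=db$, and concludes $b^{r+1}\in J\cap R=I$; only then does it use $P\in\Ass_R(R/I)\subseteq\Ass_R(S/J)$ together with the $\Gamma$-argument (applied to a maximal associated prime $Q$) to pin down the singleton. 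You reverse this: you establish $\Ass_R(S/J)=\{P\}$ directly by a symmetry argument (every associated prime contains every other, since every element of $S/J$ is $P$-power torsion for each $P\in\Ass_R(S/J)$), and then read off primariness from the embedding $R/I\hookrightarrow R S/J$ restricted to $R$-modules, wait --- from the $R$-module embedding $R/I\hookrightarrow S/J$ and the standard fact that a proper ideal with a single associated prime is primary. Your route is slightly more economical, replacing the paper's hands-on element manipulation with one appeal to the characterization of primary ideals via $\Ass$; the paper's direct computation has the minor virtue of exhibiting explicitly how $(*)$ produces the power $b^{r+1}\in I$, but both uses of $(*)$ are ultimately the same (yours is routed through Proposition \ref{Gamma}). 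No gaps.
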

\begin{proof}
Suppose $ab \in I$ and $a \notin I$. So $a \notin J$. Thus $J + Sa = S$. So $b = j + sa$ for some $j \in J$ and $s \in S$. Set $K = Rb$. Then by our assumption $(*)$, there exists
$r \geq 1$ such that $b^rs \in SK$. So $b^rs = \sum_{i = 1}^{l}s_ib = db$ for some $s_i,d \in S$. Thus $b^{r+1} = b^rj + dba$. So we get that
$b^{r+1} \in J$. Thus $b^{r+1} \in I$. Therefore  $I$ is a primary ideal in $R$.

Say $I$ is $P$-primary. So $P \in \Ass_R R/I \subseteq \Ass_R S/J$. We claim that $\Ass_R S/J = \{ P \}$. Let $Q$ be a maximal element of $\Ass_R S/J$.
Then $\Ass_R \Gamma_Q(S/J) = \{Q \}$. Furthermore by  \ref{Gamma} we have that $\Gamma_Q(S/J)$  is a $S$-submodule of $S/J$. As $S/J$ is simple and
$\Gamma_Q(S/J) \neq 0  $ we have that $\Gamma_Q(S/J) = S/J$. Thus it follows that $Q = P$ and    $\Ass_R S/J = \{ P \}$. By \ref{ind},
$P$ is the unique maximal element of $\Ass_R E_S(S/J)$.
\end{proof}

When $R $ is in the center of $S$ then we can say more.
\begin{theorem}\label{Z}
(with assumptions as in \ref{As}) Further assume that $R \subseteq Z(S)$. Let $M$ be a $S$-module. Then $\Ass_R M = \Ass_R E_S(M)$.
\end{theorem}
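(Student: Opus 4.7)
The inclusion $\Ass_R M \subseteq \Ass_R E_S(M)$ is immediate from $M \hookrightarrow E_S(M)$. For the reverse inclusion, since $S$ is left Noetherian I decompose $E_S(M) = \bigoplus_\alpha E_\alpha$ into indecomposable injective $S$-modules, and set $M_\alpha := E_\alpha \cap M$. An essentiality argument---every nonzero $S$-submodule of $E_\alpha$ is a nonzero $S$-submodule of $E_S(M)$ and hence meets $M$---shows $M_\alpha \neq 0$ and $E_\alpha = E_S(M_\alpha)$. By Theorem \ref{ind}, each $\Ass_R E_\alpha$ has a unique maximal element $P_\alpha$, which lies in $\Ass_R M_\alpha \subseteq \Ass_R M$. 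Since $\Ass_R E_S(M) = \bigcup_\alpha \Ass_R E_\alpha$, it suffices to prove $\Ass_R E_\alpha = \{P_\alpha\}$ for each $\alpha$.

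For this, I first verify that every nonzero $y \in E_\alpha$ satisfies $\ann_R y \subseteq P_\alpha$. Applying Theorem \ref{ind} to the $S$-cyclic submodule $Sy$ (whose $S$-injective hull is $E_\alpha$ by indecomposability) yields $s \in S$ with $\ann_R(sy) = P_\alpha$; for any $r \in \ann_R y$, centrality gives $r(sy) = s(ry) = 0$, hence $r \in \ann_R(sy) = P_\alpha$. It follows that for $r \in R \setminus P_\alpha$, the multiplication map $\mu_r \colon E_\alpha \to E_\alpha$---which is $S$-linear by centrality---has trivial kernel; its image $rE_\alpha$ is then an injective $S$-submodule isomorphic to $E_\alpha$, and indecomposability of $E_\alpha$ forces $rE_\alpha = E_\alpha$, so $\mu_r$ is an $S$-automorphism.

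The singleton conclusion will then follow once I show $\Gamma_{P_\alpha}(E_\alpha) = E_\alpha$: for then every nonzero $y$ satisfies $P_\alpha^n y = 0$ for some $n$, and combining with $\ann_R y \subseteq P_\alpha$ gives $\sqrt{\ann_R y} = P_\alpha$, which forces any prime in $\Ass_R(Ry)$ to lie between $\ann_R y$ and $P_\alpha$ with radical $P_\alpha$, hence equal $P_\alpha$. The main obstacle is precisely this equality $\Gamma_{P_\alpha}(E_\alpha) = E_\alpha$. By Proposition \ref{Gamma}, $\Gamma_{P_\alpha}(E_\alpha)$ is a nonzero $S$-submodule of the uniform module $E_\alpha$, hence $S$-essential in $E_\alpha$; so the equality will follow provided $\Gamma_{P_\alpha}(E_\alpha)$ is $S$-injective. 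My approach is to combine the $R$-injective decomposition $E_\alpha = \bigoplus_Q E_R(R/Q)^{\mu_Q}$ afforded by Theorem \ref{main}---in which the first preparatory step already constrains all $Q$ with $\mu_Q > 0$ to satisfy $Q \subseteq P_\alpha$---with the $S$-automorphism property of $\mu_r$ for $r \notin P_\alpha$ and the centrality of $R$ to force the remaining $Q$-components to collapse to the single case $Q = P_\alpha$, so that $E_\alpha$ itself already coincides with the $P_\alpha$-primary $R$-summand $\Gamma_{P_\alpha}(E_\alpha)$. Verifying this collapse---despite the $R$-direct-summand decomposition not being $S$-stable a priori---is the technical heart of the argument.
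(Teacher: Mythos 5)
Your argument is correct and complete up to the point where you reduce everything to the single claim $\Ass_R E_\alpha = \{P_\alpha\}$ (equivalently $\Gamma_{P_\alpha}(E_\alpha) = E_\alpha$), but that claim is precisely the whole content of the theorem for modules with indecomposable hull, and you do not prove it: the last paragraph is a description of an intended strategy, not an argument. Worse, the ingredients you propose to combine cannot force the collapse you need. You know that $E_\alpha = \bigoplus_Q E_R(R/Q)^{\mu_Q}$ with every $Q \subseteq P_\alpha$, and that $\mu_r$ is an $S$-automorphism of $E_\alpha$ for every $r \notin P_\alpha$. But multiplication by any $r \notin P_\alpha$ is \emph{already} bijective on each $E_R(R/Q)$ with $Q \subseteq P_\alpha$ (such a module is an $R_Q$-module on which every element of $R \setminus Q \supseteq R \setminus P_\alpha$ acts invertibly), so the automorphism property places no constraint whatsoever on the components with $Q \subsetneq P_\alpha$; likewise $\ann_R y \subseteq P_\alpha$ holds for every nonzero element of every such component. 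A hypothetical $E_\alpha$ containing an $E_R(R/Q)$-component with $Q \subsetneq P_\alpha$ is fully consistent with everything you have established, so "verifying this collapse" is not a technical detail to be filled in from your data --- it requires a genuinely new input.

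The input the paper uses is localization: Lemma \ref{essT} shows that for a multiplicatively closed $T \subseteq R$ (here centrality of $R$ is essential, to get $\ann_S(x) \subseteq \ann_S(tx)$), an essential extension $M \subseteq N$ of $S$-modules localizes to an essential extension of $T^{-1}S$-modules. Given $P \in \Ass_R E_S(M)$, one sets $T = R \setminus P$; then $PT^{-1}R$ becomes a \emph{maximal} element of $\Ass_{T^{-1}R} T^{-1}E_S(M)$, and Proposition \ref{maxA} applied to the localized essential extension gives $PT^{-1}R \in \Ass_{T^{-1}R} T^{-1}M$, hence $P \in \Ass_R M$. Note this handles all associated primes at once, with no need for the decomposition into indecomposables. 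If you want to salvage your route, the natural way to close your gap is the same move: localize $M_\alpha \subseteq E_\alpha$ at a putative $Q \in \Ass_R E_\alpha$ and apply \ref{maxA} there --- at which point you have reproduced the paper's proof and your preparatory steps become unnecessary. As it stands, the proposal has a genuine gap at its decisive step.
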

The main ingredient of \ref{Z} is the following:
\begin{lemma}\label{essT}
Let $R \subset S$ be an extension of rings with $S$ left Noetherian and $R$ commutative. Assume $R \subseteq Z(S)$.
Let $M, N$ be left $S$-modules with $M \subseteq N$ an essential extension of $S$-modules. Let $T$ be any multiplicatively closed subset of $R$. Then
$T^{-1}M \subseteq T^{-1}N$ is an essential extension of left $T^{-1}S$-modules.
\end{lemma}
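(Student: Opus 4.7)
The plan is to take a nonzero $\xi \in T^{-1}N$ and exhibit a nonzero element of $T^{-1}M$ inside the cyclic $T^{-1}S$-submodule $T^{-1}S \cdot \xi$. After clearing denominators I may assume $\xi = n/1$ for some $n \in N$ whose image in $T^{-1}N$ is nonzero; equivalently, no element of $T$ annihilates $n$. Since $T \subseteq Z(S)$, central localization is exact, and for $S$-submodules $A, B \subseteq N$ one has $T^{-1}A \cap T^{-1}B = T^{-1}(A \cap B)$ inside $T^{-1}N$. Because $T^{-1}S \cdot \xi = T^{-1}(Sn)$, the task therefore reduces to showing that $Sn \cap M$ contains an element not killed by any element of $T$.

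By essentiality of $M \subseteq N$, $Sn \cap M$ is nonzero. I will argue by contradiction: suppose every element of $Sn \cap M$ is $T$-torsion. Since $S$ is left Noetherian and $Sn$ is cyclic, $Sn$ is a Noetherian $S$-module, so $Sn \cap M$ is finitely generated, say by $y_1, \ldots, y_k$. Each $y_i$ is killed by some $t_i \in T$; by centrality of the $t_i$'s, their product $t := t_1 \cdots t_k \in T$ annihilates all of $Sn \cap M$.

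Next I use Noetherianity a second time, now on the ascending chain of left ideals $\ann_S(t^j n) \subseteq S$ for $j \geq 0$: it stabilizes at some index $j_0$, so $\ann_S(t^{j_0} n) = \ann_S(t^{j_0+1} n)$. Combined with centrality this makes multiplication by $t$ injective on $t^{j_0}Sn$: any element there has the form $s t^{j_0} n$, and $t \cdot s t^{j_0} n = s t^{j_0+1} n = 0$ implies $s \in \ann_S(t^{j_0+1}n) = \ann_S(t^{j_0}n)$, forcing $s t^{j_0} n = 0$. On the other hand $t^{j_0}Sn \cap M \subseteq Sn \cap M$ is killed by $t$, so injectivity gives $t^{j_0}Sn \cap M = 0$. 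But $t^{j_0}n \neq 0$ since $n$ is not $T$-torsion, so $t^{j_0}Sn$ is a nonzero $S$-submodule of $N$, and essentiality of $M \subseteq N$ forces $t^{j_0}Sn \cap M \neq 0$, a contradiction.

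The main obstacle is identifying how to combine left Noetherianity of $S$ with centrality of $T$: Noetherianity is used once for finite generation of $Sn \cap M$ (to produce a common annihilator $t \in T$) and once for stabilization of the annihilator chain (to make $t$ act injectively on $t^{j_0}Sn$), and both invocations are essential. Without this double use the naive essentiality argument fails, because a priori $Sn \cap M$ could consist entirely of $T$-torsion elements and thus vanish after localization.
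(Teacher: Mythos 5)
Your proof is correct, and it takes a genuinely different (though related) route from the paper's. The paper simply adapts Bruns--Herzog, Proposition 3.2.5: for a nonzero $n/1 \in T^{-1}N$ one chooses $t_0 \in T$ so that $\ann_S(t_0n)$ is maximal in the family $\{\ann_S(tn) : t \in T\}$ --- this family exists and is increasing because centrality gives $\ann_S(x) \subseteq \ann_S(tx)$, which is exactly the point the paper highlights --- then applies essentiality once to find a nonzero $st_0n \in M$ and checks directly from maximality that $st_0n/1 \neq 0$ in $T^{-1}N$. Your argument instead proceeds by contradiction and uses left Noetherianity twice: once to finitely generate $Sn \cap M$ and extract a single $t \in T$ annihilating all of it, and once to stabilize the chain $\ann_S(t^jn)$ so that $t$ acts injectively on $t^{j_0}Sn$; you also need the exactness of central localization to identify $T^{-1}S\xi \cap T^{-1}M$ with $T^{-1}(Sn \cap M)$, a reduction the direct argument avoids entirely. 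Both proofs rest on the same consequence of $R \subseteq Z(S)$ (annihilators grow under multiplication by elements of $T$, and $tSn = Stn$); the paper's version is shorter and produces the witness explicitly, while yours isolates cleanly the one way the naive essentiality argument could fail --- namely that $Sn \cap M$ might consist entirely of $T$-torsion --- and rules it out. Every step of your argument checks out, including the use of centrality to get a common annihilator $t = t_1\cdots t_k$ for the whole submodule $Sn \cap M$ and not just for its generators.
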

The proof of Lemma \ref{essT} is similar to \cite[3.2.5]{BH}. The assumption $R \subseteq Z(S)$ is used to conclude that $\ann_S(x) \subseteq \ann_S(tx)$ for
any $x \in N$ and $t \in T$. We now give
\begin{proof}[Proof of Theorem \ref{Z}]
As $M \subseteq E_S(M)$ we have that $\Ass_R M \subseteq \Ass_R E_S(M)$. Conversely let $P \in \Ass_R E_S(M)$. Set $T = R\setminus P$. Consider the extension $T^{-1}R \subseteq T^{-1}S$. Notice that $T^{-1}R \subseteq Z(T^{-1}S)$. It follows that the extension $T^{-1}R \subseteq T^{-1}S$.
satisfies our assumptions \ref{As}. By Lemma \ref{essT} we get that $T^{-1}E_S(M)$ is an essential extension of $T^{-1}M$. Clearly $PT^{-1}R$ is a maximal element of $\Ass_{T^{-1}R}T^{-1}E_S(M)$. So by \ref{maxA} we get that $PT^{-1}R \in \Ass_{T^{-1}R} M$. Thus $P \in \Ass_R M$.
\end{proof}
\section{The case when $S$ is commutative and a finite extension of $R$}
In this section  $R, S$ are commutative and $S$ a finite extension of $R$ and a projective $R$-module. Recall that we assume $R$ to be regular.
It is easily seen that   $S$ is a \CM \ ring. In-fact $S$ is a projective $R$-module if and only if $S$ is \CM.

We now give a proof of Theorem \ref{CM-I}. For convenience of the reader we restate it here. 
\begin{theorem}\label{CM}
(with assumptions as above). Let $\m$ be a maximal ideal of $S$. Set $\n = \m \cap R$.
Let $\n S = Q_1\cap Q_2\cap\cdots \cap Q_c$ be a minimal primary decomposition of $\n S$ where $Q_1$ is $\m$-primary.
Then
\[
E_S\left(\frac{S}{\m}\right) = E_R\left(\frac{R}{\n}\right)^{\ell_R(S/Q_1)}.
\]
\end{theorem}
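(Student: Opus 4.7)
The plan is to combine Theorem~\ref{main} with Matlis' structure theorem for injective modules over $R$, and then compute the resulting multiplicity using Matlis duality in the local ring $S_\m$. Since $S$ is finite over $R$, the inclusion $R/\n \hookrightarrow S/\m$ realizes the field $S/\m$ as a finite $R/\n$-module, so $R/\n$ is a field, $\n$ is a maximal ideal of $R$, and $\Ass_R(S/\m) = \{\n\}$. Theorem~\ref{main} gives that $E := E_S(S/\m)$ is injective as an $R$-module; Theorem~\ref{Z} (applicable because $S$ is commutative, so $R \subseteq Z(S)$) yields $\Ass_R E = \Ass_R(S/\m) = \{\n\}$. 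Matlis' structure theorem over $R$ then produces the decomposition
\[
E \;\cong\; E_R(R/\n)^{t}, \qquad t = \dim_k \Hom_R(R/\n, E),
\]
where $k := R/\n$, so the task reduces to identifying $t$ with $\ell_R(S/Q_1)$.

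For this identification I would pass to the $S$-side via the Hom--tensor adjunction for the ring extension $R \subseteq S$: $\Hom_R(R/\n, E) \cong \Hom_S(S/\n S, E)$. Each $\sqrt{Q_i}$ is a maximal ideal of $S$ lying over $\n$, so the $Q_i$ are pairwise comaximal and the Chinese Remainder Theorem gives $S/\n S \cong \prod_{i=1}^c S/Q_i$. For $i \neq 1$, any $S$-linear map $S/Q_i \to E$ has image annihilated both by $Q_i$ and, since $E$ is $\m$-torsion, by a power of $\m$; as $Q_i + \m = S$ this forces the image to vanish. Consequently
\[
\Hom_R(R/\n, E) \;\cong\; \Hom_S(S/Q_1, E).
\]

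The last step is Matlis duality in the Noetherian local ring $(S_\m, \m S_\m)$. Using the standard identifications $E = E_{S_\m}(S_\m/\m S_\m)$ and $S/Q_1 = S_\m/Q_1 S_\m$ (the latter because $S/Q_1$ is $\m$-primary), Matlis duality applied to the finite length module $S_\m/Q_1 S_\m$ preserves length, giving
\[
\ell_{S_\m}\bigl(\Hom_S(S/Q_1, E)\bigr) \;=\; \ell_{S_\m}(S_\m/Q_1 S_\m) \;=\; \ell_S(S/Q_1).
\]
Writing $\kappa = S/\m$, each simple $S_\m$-module has $k$-dimension $[\kappa:k]$, so for any finite length $S_\m$-module $X$ one has $\dim_k X = [\kappa:k]\,\ell_{S_\m}(X) = \ell_R(X)$. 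Applying this bookkeeping to both $\Hom_S(S/Q_1,E)$ and $S/Q_1$ produces
\[
t \;=\; \dim_k \Hom_S(S/Q_1,E) \;=\; [\kappa:k]\,\ell_S(S/Q_1) \;=\; \ell_R(S/Q_1),
\]
as desired. The main technical point I foresee is the Hom-vanishing for $i \neq 1$ together with the careful tracking of length/dimension conversions between $R$ and $S_\m$ across the residue field extension $k \subseteq \kappa$; once those are handled, the identity falls out.
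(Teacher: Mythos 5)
Your proposal is correct and follows essentially the same route as the paper: apply Theorem~\ref{main} plus the Matlis structure theorem over $R$ to reduce to computing $\dim_{R/\n}\Hom_R(R/\n,E)$, identify this socle with $\Hom_S(S/Q_1,E)$, apply Matlis duality over $S_\m$ to get $\ell_S(S/Q_1)$, and convert lengths across the residue field extension. The only step you handle differently is the identification $\soc_{R,\n}E=(0:_E Q_1)$, which you obtain via adjunction, the Chinese Remainder decomposition of $S/\n S$, and comaximality of $Q_i$ with $\m$ for $i\neq 1$, whereas the paper argues directly with a nonzerodivisor $\xi\in Q_2\cap\cdots\cap Q_c\setminus\m$; both arguments are sound.
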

\begin{proof}
Notice that $\n$ is a maximal ideal of $R$. Set $E = E_S(S/\m)$. By Theorem 1 we have that $E$ is an injective $R$-module.

 Note that we have an injection $R/\n \hookrightarrow S/\m $ of $R$-modules and an injection $S/\m \hookrightarrow E$ of $S$-modules. Composing we have an
an injection $R/\n \hookrightarrow E$ of $R$-modules. Thus $\n \in \Ass_R E$.

\emph{Claim 1:} $\Ass_R E = \{ \n \}.$

Suppose $P \in \Ass_R E$. Say $P = (0 \colon x)$ for some non-zero $x \in E$. Each element of $E$ is annhilated by a power of $\m$.
Say $\m^t x = 0$. Then $\n^t x = 0$. This implies $\n^t \subseteq P$. As $P$ is prime we obtain $\n \subseteq P$. So $P = \n$. This proves Claim 1.

Thus by the structure theorem of injectives \cite[3.2.8]{BH} we have that
\[
E = E_R\left(\frac{R}{\n}\right)^{c} \quad \text{where} \ c = \dim_{R/\n} \Hom_R\left(\frac{R}{\n}, E \right).
\]
Notice that
\[
\soc_{R,\n} E = \{ t \in E \mid \n t = 0 \} \cong \Hom_R\left(\frac{R}{\n}, E\right).
\]

\emph{Claim 2:} $\soc_{R,\n} E = (0 \colon_E Q_1)$.

Let $t \in \soc_{R,\n} E$. Then note that $\n S t = 0$. Notice $Q_2\cap \cdots \cap Q_c \nsubseteq \m$. Say $\xi \in Q_2\cap \cdots \cap Q_c \setminus \m$. Let $a \in Q_1.$ So $a\xi \in \n S$. Thus $a\xi t = 0$. Now $\Ass_S E = \{ \m\}$. This implies that $\xi$ is a non-zero divisor on $E$. So
$at = 0$. Thus $Q_1 t = 0$.

Conversely if $t \in (0 \colon_E Q_1)$, then $Q_1t = 0$. Since $\n \subseteq \n S \subseteq Q_1$ we get that $\n t = 0$. Thus Claim 2 is proved.

Notice $(0 \colon_E Q_1) \cong \Hom_S(S/Q_1, E)$. Since $\Supp_S(S/Q_1) = \{ \m \}$ and as $E = E_S(S/\m)$ we get that
$\ell_S(\Hom_S(S/Q_1, E)) = \ell_S(S/Q_1)$.

Notice that since $\n S \subseteq Q_1 \subseteq \m$ we have that $Q_1 \cap R = \n$. So $S/Q_1$ is a finite dimensional vector space over $R/\n$.
Again as $\Supp_S(S/Q_1) = \{ \m \}$ we get that $\ell_R (S/Q) = r \ell_S (S/Q_1)$ where $r = \dim_{R/\n} S/\m$.

Thus
\[
c = \dim_{R/\n}\soc_{R,\n} E = \dim_{R/\n} (0 \colon_E Q_1) = r \ell_S (S/Q_1) = \ell_R(S/Q_1).
\]
\end{proof}

\end{document}